\newcommand{\kk}{\mathbb{k}}
\newcommand{\KK}{\mathbb{K}}
\newcommand{\bmm}{\mathbb{M}}
\newcommand{\obmm}{{\overline{\bmm}}}
\newcommand{\NN}{\normalfont\mathbb{N}}
\newcommand{\ZZ}{\normalfont\mathbb{Z}}
\newcommand{\MM}{{\normalfont\mathfrak{M}}}
\newcommand{\PP}{{\normalfont\mathbb{P}}}
\newcommand{\XX}{{\normalfont\mathbf{X}}}
\newcommand{\bXX}{{\normalfont\mathbb{X}}}
\newcommand{\bYY}{{\normalfont\mathbb{Y}}}
\newcommand{\dd}{{\normalfont\mathbf{d}}}
\newcommand{\mm}{{\normalfont\mathfrak{m}}}
\newcommand{\QQ}{\mathbb{Q}}
\newcommand{\pp}{{\normalfont\mathfrak{p}}}
\newcommand{\nn}{{\normalfont\mathfrak{N}}}
\newcommand{\bn}{{\normalfont\mathbf{n}}}
\newcommand{\bm}{{\normalfont\mathbf{m}}}
\newcommand{\ttt}{{\normalfont\mathbf{t}}}
\newcommand{\nnn}{{\normalfont\mathfrak{n}}}
\newcommand{\depth}{\normalfont\text{depth}}
\newcommand{\Tor}{\normalfont\text{Tor}}
\newcommand{\pd}{\normalfont\text{pd}}
\newcommand{\Ker}{\normalfont\text{Ker}}
\newcommand{\HT}{\normalfont\text{ht}}
\newcommand{\Ann}{\normalfont\text{Ann}}
\newcommand{\Supp}{\normalfont\text{Supp}}
\newcommand{\Ass}{{\normalfont\text{Ass}}}
\newcommand{\Rees}{\mathcal{R}}
\newcommand{\ee}{{\normalfont\mathbf{e}}}
\newcommand{\Fitt}{\normalfont\text{Fitt}}
\newcommand{\sF}{\widetilde{\mathfrak{F}}}
\newcommand{\JJ}{\mathcal{J}}
\newcommand{\BB}{\mathfrak{B}}
\newcommand{\EE}{\mathbb{E}}
\newcommand{\OO}{\mathcal{O}}
\newcommand{\FF}{\normalfont\mathcal{F}}
\newcommand{\HH}{\normalfont\text{H}}
\newcommand{\gr}{{\normalfont\text{gr}}}
\newcommand{\AAA}{\mathfrak{A}}
\newcommand{\bideg}{\normalfont\text{bideg}}
\newcommand{\Proj}{\normalfont\text{Proj}}
\newcommand{\Hilb}{{\normalfont\text{Hilb}}}
\newcommand{\Spec}{\normalfont\text{Spec}}
\newcommand{\multProj}{\normalfont\text{MultiProj}}
\newcommand{\biProj}{{\normalfont\text{BiProj}}}
\newtheorem{theorem}{Theorem}[section]
\newtheorem{headthm}{Theorem}
\newaliascnt{headcor}{headthm}
\newaliascnt{headthmdef}{headthm}
\newtheorem{headthmdef}[headthmdef]{Definition-Theorem}
\newaliascnt{headconj}{headthm}
\newaliascnt{corollary}{theorem}
\newaliascnt{lemma}{theorem}
\newtheorem{lemma}[lemma]{Lemma}
\newaliascnt{conjecture}{theorem}
\newaliascnt{proposition}{theorem}
\newtheorem{proposition}[proposition]{Proposition}
\theoremstyle{definition}
\newaliascnt{definition}{theorem}
\newtheorem{definition}[definition]{Definition}
\newaliascnt{notation}{theorem}
\newaliascnt{example}{theorem}
\newaliascnt{examples}{theorem}
\newaliascnt{remark}{theorem}
\newtheorem{remark}[remark]{Remark}
\newaliascnt{problem}{theorem}
\newaliascnt{construction}{theorem}
\newaliascnt{setup}{theorem}
\newtheorem{setup}[setup]{Setup}
\newaliascnt{algorithm}{theorem}
\newaliascnt{observation}{theorem}
\newaliascnt{defprop}{theorem}
\def\equationautorefname~#1\null{(#1)\null}
\def\sectionautorefname~#1\null{Section #1\null}
\def\subsectionautorefname~#1\null{\S #1\null}
\def\surjects{\twoheadrightarrow}
\begin{document}

\title{Mixed multiplicities and projective degrees of rational maps}

\author{Yairon Cid-Ruiz}
\address[Cid-Ruiz]{Max Planck Institute for Mathematics in the Sciences, Inselstra\ss e 22, 04103 Leipzig, Germany.}
\email{cidruiz@mis.mpg.de}
\urladdr{https://ycid.github.io}

\keywords{mixed multiplicities, projective degrees, rational maps, Hilbert polynomials, Hilbert series, Rees algebra, saturated special fiber ring.}
\subjclass[2010]{Primary 13H15, 14E05; Secondary 13A30, 13D02.}

\begin{abstract}
	We consider the notion of mixed multiplicities for multigraded modules by using Hilbert series, and this is later applied to study the projective degrees of rational maps. 
	We use a general framework to determine the projective degrees of a rational map via a computation of the multiplicity of  the saturated special fiber ring. 
	As specific applications, we provide explicit formulas for all the projective degrees of rational maps determined by perfect ideals of height two or by Gorenstein ideals of height three.
\end{abstract}

\maketitle

\section{Introduction}

Let $\BB$ be a standard multigraded algebra $\BB=\bigoplus_{\nu \in \NN^r} {\left[\BB\right]}_\nu$ over an Artinian local ring $A={\left[\BB\right]}_{(0,\ldots,0)}$.
Let $\bmm$ be a finitely generated $\ZZ^r$-graded $\BB$-module.
It is known that the Hilbert function 
$$
\nu \in \ZZ^r \;\, \mapsto \;\, \text{length}_A\big({\left[\bmm\right]}_\nu\big) \in \NN
$$
of $\bmm$ coincides with a polynomial $P_\bmm(\XX)=P_\bmm(X_1,\ldots,X_r)$ for $\nu \gg (0,\ldots,0) \in \NN^r$ (see \cite[Theorem 4.1]{HERMANN_MULTIGRAD}).
This polynomial $P_\bmm(\XX)$ is referred to as the \textit{Hilbert polynomial} of $\bmm$, and its total degree is equal to the dimension $d_{++}=\dim\left(\Supp_{++}(\bmm)\right)$ of the relevant support $\Supp_{++}(\bmm)$ of $\bmm$.
Let $\nn \subset \BB$ be the multigraded irrelevant ideal $\nn = \bigoplus_{\nu_1>0,\ldots,\nu_r >0}{\left[\BB\right]}_\nu$.
Then, the relevant support $\Supp_{++}(\bmm)$ is given by $\Supp_{++}(\bmm)=\lbrace \pp \in \Supp(\bmm) \mid \pp \text{ is $\NN^r$-graded and } \pp \not\supseteq \nn \rbrace$.
If we write 
$$
P_{\bmm}(\XX) = \sum_{n_1,\ldots,n_r \ge 0} e(n_1,\ldots,n_r)\binom{X_1+n_1}{n_1}\cdots \binom{X_r+n_r}{n_r},
$$
then $0 \le e(n_1,\ldots,n_r) \in \ZZ$ for all $n_1+\ldots+n_r = d_{++}$.
For $\bn \in \NN^r$ with $\lvert \bn \rvert = n_1+\cdots+n_r=d_{++}$, the non-negative integer $e(n_1,\ldots,n_r)$ is called the \textit{mixed multiplicity of $\bmm$ of type $\mathbf{n}=(n_1,\ldots,n_r)$} and it is denoted as $e(\mathbf{n};\bmm)=e(n_1,\ldots,n_r;\bmm)$.

The concept of mixed multiplicities provides the right generalization of multiplicities (or degrees) to a multigraded setting, and its study goes back to seminal work by van der Waerden \cite{VAN_DER_WAERDEN}.
These invariants have been further studied and developed by Bhattacharya \cite{Bhattacharya}, by Katz, Mandal and Verma \cite{VERMA_BIGRAD}, by Herrmann, Hyry, Ribbe and Tang \cite{HERMANN_MULTIGRAD}, and by Trung \cite{TRUNG_POSITIVE}. 
For more details, see the survey paper \cite{TRUNG_VERMA_SURVEY} and the references therein.

Motivated by the notion of multidegree defined by Miller and Sturmfels in \cite[\S 8.5]{MILLER_STURMFELS}, in this paper,  we develop the theory of mixed multiplicities for multigraded modules by using \textit{Hilbert series}. 
Contrary to the single-graded case, in a multigraded setting the approaches with Hilbert polynomials or Hilbert series may yield different results. 
This comes from the fact that the Hilbert polynomial can only read irreducible components which are relevant in the multigraded geometric sense. 
To be more precise, let $d = \dim(\bmm)$ and denote the Hilbert series of $\bmm$ by 
$$
\Hilb_\bmm(\ttt) = \Hilb_\bmm(t_1,\ldots,t_r) = \sum_{\nu \in \ZZ^r} \,\text{length}_A\left({\left[\bmm\right]}_\nu\right)\, t_1^{\nu_1}\cdots t_r^{\nu_r}.
$$
In our first main result we provide a different notion of mixed multiplicities (defined in terms of Hilbert series) and we relate it to the above notion of mixed multiplicities (defined in terms of Hilbert polynomials).
For the moment, note that $\dim\left(\bmm/\HH_{\nn}^0(\bmm)\right)=r+d_{++}$ (see \autoref{lem_properties_quot_torsion}).
We show that the mixed multiplicities $e(\bn;\bmm)$ can be expressed in terms of the new ones applied to the module $\bmm/\HH_\nn^0(\bmm)$, i.e., after modding out the torsion with respect to $\nn$.

\medskip

\begin{headthmdef}[\autoref{thm_struc_Hilb_series}, \autoref{lem_unique_values}, \autoref{def_mixed_mult}, \autoref{thm_equality_mults}]
	Let $\BB$ be a standard multigraded algebra $\BB=\bigoplus_{\nu \in \NN^r} {\left[\BB\right]}_\nu$ over an Artinian local ring.
	Let $\bmm$ be a finitely generated $\ZZ^r$-graded $\BB$-module.
	Let $d=\dim(\bmm)$ and $d_{++}=\dim\left(\Supp_{++}(\bmm)\right)$.	
	\begin{enumerate}[(I)]
		
		\medskip
		\item[(I) {\rm [A structural result for Hilbert series]}] For each $\bn = (n_1,\ldots,n_r) \in \NN^r$ with $\vert \bn \rvert = d$, there exists a Laurent polynomial $Q_\bn(\ttt) = Q_{(n_1,\ldots,n_r)}(t,\ldots,t) \in \ZZ[\ttt,\ttt^{\mathbf{-1}}]=\ZZ[t_1,\ldots,t_r,t_1^{-1},\ldots,t_r^{-1}]$ with $Q_\bn(\mathbf{1})=Q_{(n_1,\ldots,n_r)}(1,\ldots,1) \ge 0$, such that
		$$
			\Hilb_\bmm(\ttt) = \sum_{\lvert\bn\rvert = d} \frac{Q_\bn(\ttt)}{(1-t_1)^{n_1}\cdots(1-t_r)^{n_r}},
		$$
		and the following statements hold:
		\begin{enumerate}[(a)]
			\item There is at least one $\bn \in \NN^r$ with $\lvert \bn \rvert = d$ such that $Q_\bn(\mathbf{1}) > 0$.
			\item The values of $Q_\bn(\mathbf{1})$ are uniquely determined by the module $\bmm$.
		\end{enumerate}
		
		\medskip
		\item[(II) {\rm [Definition]}] For each $\bn = (n_1,\ldots,n_r) \in \ZZ^r$  with $n_i \ge -1$ and $\lvert \bn \rvert = d-r$, we define {\tt the mixed multiplicity of $\bmm$ of type $\bn = (n_1,\ldots,n_r)$ in terms of Hilbert series} as 
		$$
		e_\bn\left(\bmm\right) = e_{\left(n_1,\ldots,n_r\right)}\left(\bmm\right) = Q_{\mathbf{n+1}}(\mathbf{1}),
		$$
		where $Q_{\mathbf{n+1}}(\ttt)=Q_{\left(n_1+1,\ldots,n_r+1\right)}(t,\ldots,t) \in \ZZ[\ttt,\ttt^{\mathbf{-1}}]$ are Laurent polynomials obtained in part $(I)$.
		
		\medskip
		\item[(III) {\rm [Relation between the two notions of mixed multiplicities]}] For each $\bn \in \NN^r$ with $\lvert \bn \rvert = d_{++}$,  we have the following equality
		$$
		e(\bn;\bmm) = e_\bn\Big(\bmm/\HH_\nn^0(\bmm)\Big).
		$$		
	\end{enumerate}
\end{headthmdef}

\medskip

One advantage of our approach with Hilbert series is that we can read certain mixed multiplicities which cannot be read using the Hilbert polynomial approach.
For instance, if $\bmm$ has a minimal prime of maximal dimension containing the irrelevant ideal $\nn$, then the contribution of that minimal prime is summed up in some $e_\bn(\bmm)$ but not in any $e(\bn;\bmm)$.

After defining the new mixed multiplicities, we prove several general results about these invariants (see \autoref{lem_basic_props},  \autoref{thm_mult_gr_mixed} and \autoref{thm_mixed_mult_as_single_grad}).
We also provide a different proof for the existence of the Hilbert polynomial in a multigraded setting (see \autoref{thm_equality_mults}). 
Similarly to \cite{TRUNG_POSITIVE}, we use \emph{filter-regular elements} to substitute the notion of \emph{general elements}.

\medskip

In the second half of the paper, we consider the multidegrees of multiprojective schemes as mixed multiplicities (see \autoref{def_multdegree}). 
In particular, our main focus will be on the \textit{projective degrees} of rational maps.
The projective degrees of rational maps are fundamental and classical invariants in Algebraic Geometry.
For more details on the subject, the reader is referred to \cite[Example 19.4]{HARRIS} and \cite[\S 7.1.3]{DOLGACHEV}.
Recently, the projective degrees of rational maps have also been considered in the area of Algebraic Statistics in \cite[\S 5]{EXPONENTIAL_VARIETIES}.

\medskip

Next, we describe our results regarding projective degrees.
Let $\kk$ be a field and $R$ be the polynomial ring $R=\kk[x_0,\ldots,x_d]$.
Let $n \ge d$ and 
\begin{equation}
	\label{eq_rat_map}
	\FF : \PP_\kk^d = \Proj(R) \dashrightarrow  \PP_\kk^n
\end{equation}
be a rational map defined by $n+1$ homogeneous elements $\{f_0,  \ldots, f_n\} \subset R$ of the same degree $\delta > 0$ and $I \subset R$ be the homogeneous ideal $I=\left(f_0,\ldots,f_n\right)$.
The projective degrees of $\FF$ are defined as the multidegrees of the graph of $\FF$ (see \autoref{def_proj_degrees}), and they are denoted as $d_i(\FF)$ for $0 \le i \le d$.
In classical geometrical terms, $d_i(\FF)$ equals the number of points in the intersection of the graph $\Gamma \subset \PP_\kk^d \times_\kk \PP_\kk^n$ of $\FF$ with the product $H \times_\kk K \subset \PP_\kk^d \times_\kk \PP_\kk^n$, where $H \subset \PP_\kk^d$ and $K \subset \PP_\kk^n$ are general subspaces of dimension $d-i$ and $n-d+i$, respectively.
 
Our main tool for the computation of the projective degrees $d_i(\FF)$ will be to exploit the \textit{saturated special fiber ring} \cite{MULTPROJ} (see \autoref{thm_equal_d_0_sat_fib}).

As specific applications, we compute all the projective degrees for rational maps determined by  perfect ideals of height two or by Gorenstein ideals of height three.
In both cases we assume the condition $G_{d+1}$, that is, $\mu(I_{\pp}) \le \dim(R_{\pp})$  for all $\pp \in V(I) \subset \Spec(R)$  such that $\HT(\pp)<d+1$.  However, it should be noted that the condition $G_{d+1}$ is always satisfied by generic perfect ideals of height two and by generic Gorenstein ideals of height three.

\medskip

In the conditions expressed in the theorem below, we use the fact that the minimal resolution of any perfect ideal of height two is described by the Hilbert-Burch theorem (see, e.g., \cite[Theorem 20.15]{EISEN_COMM}).

 \begin{headthm}[\autoref{thm_proj_deg_perf_ht_2}]
 		\label{thmB}
 		With the notations above, assume the following conditions:
 	\begin{enumerate}[(i)]
 		\item $I$ is perfect of height two with Hilbert-Burch resolution of the form
 		$$
 		0 \rightarrow \bigoplus_{i=1}^nR(-\delta-\mu_i) \xrightarrow{\varphi} {R(-\delta)}^{n+1} \rightarrow I\rightarrow 0.			
 		$$
 		\item $I$ satisfies the condition $G_{d+1}$.
 	\end{enumerate}
 	Then, the projective degrees of the rational map $\FF:\PP_\kk^d \dashrightarrow \PP_\kk^n$ in \autoref{eq_rat_map} are given by
 	$$
 	d_i(\FF) =  
 	e_{d-i}(\mu_1,\mu_2,\ldots,\mu_n)
 	$$
 	where $	e_{d-i}(\mu_1,\mu_2,\ldots,\mu_n)$ denotes the elementary symmetric polynomial
 	$$
 	e_{d-i}(\mu_1,\mu_2,\ldots,\mu_n)= 
 	\sum_{1\le j_1 < j_2 < \cdots < j_{d-i} \le n} \mu_{j_1}\mu_{j_2}\cdots\mu_{j_{d-i}}.
 	$$
 \end{headthm}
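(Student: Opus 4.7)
The plan is to interpret the projective degrees as bigraded mixed multiplicities of the Rees algebra $\Rees_R(I)$, use the condition $G_{d+1}$ to transfer the computation to the symmetric algebra $\Sym_R(I)$ (whose presentation is explicit via the Hilbert--Burch matrix), and then read off the elementary symmetric polynomials from the resulting Hilbert series through Part~(II) of the Definition-Theorem.

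First, I would identify the graph $\Gamma$ of $\FF$ with $\biProj\bigl(\Rees_R(I)\bigr)$, bigraded by the $R$-grading and the Rees grading. A standard intersection-theoretic computation on $\PP^d_\kk\times_\kk\PP^n_\kk$ then yields $d_i(\FF)=e\bigl(i,\,d-i;\,\Rees_R(I)\bigr)$, and Part~(III) rewrites this as
\[
d_i(\FF) \;=\; e_{(i,\,d-i)}\!\Bigl(\Rees_R(I)\big/\HH_{\nn}^0\bigl(\Rees_R(I)\bigr)\Bigr).
\]
So the task reduces to computing the bigraded Hilbert series of $\Rees_R(I)$ modulo $\nn$-torsion and extracting the relevant Laurent polynomial at $\mathbf{1}$.

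The key (and hardest) step exploits the presentation
\[
\Sym_R(I) \;\cong\; R[T_0,\ldots,T_n]\big/(\ell_1,\ldots,\ell_n), \qquad \ell_j \;=\; \sum_{i=0}^{n}\varphi_{ij}T_i,
\]
with $\ell_j$ bihomogeneous of bidegree $(\mu_j,1)$. Under $G_{d+1}$, classical approximation-complex theory for perfect height-two ideals (Herzog--Simis--Vasconcelos) shows that the $\mathcal{Z}$-complex of $(f_0,\ldots,f_n)$ is acyclic and resolves $\Sym_R(I)$, that the kernel of $\Sym_R(I)\surjects\Rees_R(I)$ is $\nn$-torsion, and that the Koszul complex on $\ell_1,\ldots,\ell_n$ is acyclic modulo $\nn$-torsion. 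Combining these yields
\[
\Hilb_{\Rees_R(I)/\HH_{\nn}^0}(s,t) \;=\; \frac{\prod_{j=1}^{n}\bigl(1-s^{\mu_j}t\bigr)}{(1-s)^{d+1}(1-t)^{n+1}}.
\]
Making this transfer precise under exactly $G_{d+1}$ is the main obstacle; once it is in hand the rest is purely formal.

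Finally, I would extract the mixed multiplicities combinatorially. Substitute $u=1-s$, $v=1-t$ and observe that
\[
1 - s^{\mu_j}t \;\equiv\; \mu_j u + v \pmod{(u,v)^2},
\]
so the degree-$n$ part of the numerator equals
\[
\prod_{j=1}^{n}(\mu_j u + v) \;=\; \sum_{a=0}^{n} e_a(\mu_1,\ldots,\mu_n)\,u^a v^{n-a}.
\]
Matching against the decomposition of Part~(I), the Laurent polynomial $Q_{(p,q)}(s,t)$ attached to the summand with denominator $(1-s)^p(1-t)^q$ (with $p+q=d+2$) satisfies $Q_{(p,q)}(\mathbf{1}) = e_{d+1-p}(\mu_1,\ldots,\mu_n)$. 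Specializing to $(p,q)=(i+1,\,d-i+1)$ and invoking Part~(II) gives $d_i(\FF)=e_{d-i}(\mu_1,\ldots,\mu_n)$, as claimed.
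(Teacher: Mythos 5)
Your route and the paper's route are genuinely different. The paper's proof does not attempt a direct bigraded Hilbert series computation for $\Rees_R(I)$; instead, it uses Proposition~\ref{lem_find_filt_reg_rat_map} to cut down by a filter-regular sequence of linear forms $h_1,\ldots,h_d$, reducing $d_i(\FF)$ to the $0$-th projective degree $e\bigl(0,d-i;\Rees_{S_i}(J_i)\bigr)$ of a rational map determined by the cut-down ideal $J_i\subset S_i$, which remains perfect of height two satisfying $G_{d+1-i}$ with the same syzygy degrees $\mu_1,\ldots,\mu_n$. It then identifies this $0$-th projective degree with $e\bigl(\sF_{S_i}(J_i)\bigr)$ via Theorem~\ref{thm_equal_d_0_sat_fib} and invokes the externally established formula \cite[Theorem~A]{MULT_SAT_PERF_HT_2} for the multiplicity of the saturated special fiber ring. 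Your approach, by contrast, stays on $\PP^d\times\PP^n$ and tries to read all $d_i$ at once from the Hilbert series of $\Rees_R(I)$ via $\Sym_R(I)$ and the Koszul complex on the $\ell_j$. This is a sensible and direct strategy, and your combinatorial extraction step (the change of variables $u=1-s$, $v=1-t$ and the expansion $\prod_j(\mu_j u+v)=\sum_a e_a(\mu)\,u^a v^{n-a}$) is exactly right once the analytic input is in place.

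The gap is in the step you flag as the hardest. The displayed Hilbert series identity for $\Rees_R(I)/\HH_\nn^0$ is not an exact equality under $G_{d+1}$. What the Koszul complex actually gives is the Euler-characteristic relation
\[
\frac{\prod_{j=1}^{n}\bigl(1-s^{\mu_j}t\bigr)}{(1-s)^{d+1}(1-t)^{n+1}} \;=\; \sum_{l\ge 0}(-1)^l\,\Hilb_{\HH_l(K_\bullet)}(s,t),
\]
and under $G_{d+1}$ one only knows (localize at any $\pp\neq\mm$, where $I_\pp$ is of linear type and the $\ell_j$ become a regular sequence) that $\HH_l(K_\bullet)$ for $l\ge 1$ is $\mm$-torsion, and likewise that $\ker\bigl(\Sym_R(I)\twoheadrightarrow\Rees_R(I)\bigr)=\HH_\mm^0\bigl(\Sym_R(I)\bigr)$ is $\mm$-torsion. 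These modules need not vanish and do contribute to $\Hilb_{\Rees_R(I)}$; indeed when $n>d+1$ the $\ell_j$ are not even a regular sequence. To close the gap one must show that a finitely generated bigraded $\mm$-torsion module $N$ has Hilbert series of the form $P(s,t)/(1-t)^q$ with no $(1-s)$ in the denominator, hence contributes only to mixed multiplicities $e_{(-1,*)}$ and therefore does not alter $e_{(i,\,d-i)}\bigl(\Rees_R(I)\bigr)$ for $i\ge 0$, which are the projective degrees. This is true, but it is an essential lemma and you cannot simply assert the Hilbert series equality. Separately, the assertion that the $\mathcal{Z}$-complex is acyclic under $G_{d+1}$ alone should be sourced or replaced: the classical Herzog--Simis--Vasconcelos acyclicity criterion for strongly CM ideals requires $\mathcal{F}_1=G_\infty$, and what survives under $G_{d+1}$ is precisely the weaker ``acyclic off $V(\mm)$'' statement you in fact need.

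Once these points are made precise, your proof would be a legitimate alternative that avoids the filter-regular reduction entirely and computes all $d_i$ in one shot; the trade-off is that it effectively re-proves the Hilbert series input that the paper imports from \cite{MULT_SAT_PERF_HT_2} after reducing to a single-graded statement.
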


\smallskip

In the theorem below, we use the fact that the minimal resolution of any Gorenstein ideal of height three is described by the Buchsbaum-Eisenbud structure theorem \cite{BuchsbaumEisenbud}.

\smallskip

\begin{headthm}[\autoref{thm_proj_deg_Gor_ht_3}]
	\label{thmC}
	With the notations above, assume the following conditions:
	\begin{enumerate}[(i)]
		\item $I$ is a Gorenstein ideal of height three.
		\item Every non-zero entry of an alternating minimal presentation matrix of $I$  has degree $D\ge 1$.
		\item $I$ satisfies the condition $G_{d+1}$.		
	\end{enumerate}
	Then, the projective degrees of the rational map $\FF:\PP_\kk^d \dashrightarrow \PP_\kk^n$ in \autoref{eq_rat_map} are given by
	$$
	d_i(\FF) = \begin{cases}
	D^{d-i} \sum_{k=0}^{\lfloor\frac{n-d+i}{2}\rfloor}\binom{n-1-2k}{d-i-1} \quad \text{ if } 0 \le i \le d-3 \\
	\delta^{d-i} \quad\quad\quad\;\,\,\,\,\qquad\qquad\qquad \text{if } d-2 \le i \le d.
	\end{cases}
	$$	
\end{headthm}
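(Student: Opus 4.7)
The plan is to realize the projective degrees $d_i(\FF)$ as mixed multiplicities of bigraded algebras attached to $I$, and then compute these via the Hilbert series framework from the Definition-Theorem above. By the definition of projective degrees as multidegrees of the graph of $\FF$, each $d_i(\FF)$ is a mixed multiplicity of the bigraded Rees algebra $\Rees(I)$. Under the hypothesis $G_{d+1}$, the result \autoref{thm_equal_d_0_sat_fib} transfers the top-dimensional contributions from $\Rees(I)$ to the saturated special fiber ring $\sfib$, whose Hilbert series is considerably more tractable than that of $\Rees(I)$. The ``upper'' projective degrees $d_i(\FF) = \delta^{d-i}$ for $d-2 \le i \le d$ will then fall out from a direct dimension count on the graph, using only that the base locus of $\FF$ has codimension at least three; this is precisely what the combination of $G_{d+1}$ and the Gorenstein-height-three hypothesis guarantees.

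For the remaining degrees $0 \le i \le d-3$, I would invoke the Buchsbaum-Eisenbud structure theorem: $I$ is the ideal of submaximal Pfaffians of an alternating $(n+1)\times(n+1)$ matrix $\varphi$ (with $n$ even), every entry of $\varphi$ has degree $D$, and consequently $\delta = nD/2$. Under $G_{d+1}$, the $\nn$-saturations of the symmetric algebra $\Sym(I)$ and of the Rees algebra $\Rees(I)$ agree in the range relevant for top-dimensional multiplicities, so $\sfib$ can be computed from an approximation complex (or a Kustin-Ulrich type complex) built from $\varphi$. The heart of the argument is to extract from this complex the explicit bigraded Hilbert series of $\sfib$ as a rational function whose numerator is a polynomial in $t_2^D$ reflecting the self-dual Pfaffian structure. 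The binomial sum $\sum_k \binom{n-1-2k}{d-i-1}$ will emerge as a coefficient extraction from this numerator, while the factor $D^{d-i}$ records the degree shift built into the Pfaffian presentation.

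With the Hilbert series of $\sfib$ in hand, the mixed multiplicities $e_\bn(\sfib)$ are read off via Definition-Theorem (II), and by part (III) together with the reduction above, these yield the claimed values of $d_i(\FF)$. The main obstacle is producing the explicit Hilbert series of $\sfib$ under only the hypothesis $G_{d+1}$: one cannot directly appeal to the stronger Morey-Ulrich description of $\Rees(I)$, which typically assumes $G_\infty$, so the Pfaffian combinatorics must be tracked through a careful analysis of the approximation complex, together with an $\nn$-saturation step that isolates the contribution of $\varphi$ to the top-dimensional part of $\sfib$. Once this Hilbert series computation is completed, the formula follows by an elementary binomial manipulation.
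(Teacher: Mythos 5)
Your high-level strategy---realizing $d_i(\FF)$ as multidegrees of the graph and routing the computation through the saturated special fiber ring---matches the paper's, but there are two genuine gaps. First, \autoref{thm_equal_d_0_sat_fib} only yields $d_0(\FF)=e\big(\sF(I)\big)$; it says nothing about $d_i(\FF)$ for $i>0$. The paper bridges this by slicing: \autoref{lem_find_filt_reg_rat_map} produces linear forms $h_1,\dots,h_d$ that are filter-regular on $\Rees(I)$, on $\gr_I(R)$, on $R/I$ and on the Fitting ideals, so that $d_i(\FF)=e\big(0,d-i;\Rees_{S_i}(J_i)\big)$ is the $0$-th projective degree of the restricted map on $\Proj(S_i)\cong\PP_\kk^{d-i}$, and---crucially---part $(iii)$ verifies that for $i\le d-3$ the sliced ideal $J_i$ is again Gorenstein of height three with the same alternating presentation degrees and satisfies $G_{d+1-i}$, so the $d_0$-result applies to it. Your proposal never performs this reduction; you instead speak of a ``bigraded Hilbert series of $\sfib$,'' but $\sF(I)$ is a single-graded algebra whose multiplicity captures only $d_0$. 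Without the slicing step (or an honest computation of the full bigraded Hilbert series of $\Rees(I)/\HH_\nn^0(\Rees(I))$, which you do not carry out), the intermediate degrees $0<i\le d-3$ are not reached.

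Second, and more seriously, the numerical formula $D^{d-i}\sum_{k}\binom{n-1-2k}{d-i-1}$ is, in the paper, imported wholesale from \cite[Theorem A]{MULT_GOR_HT_3}, which computes $e\big(\sF(I)\big)$ for height-three Gorenstein ideals under $G_{d+1}$. You propose to rederive this via an approximation-complex analysis of the Pfaffian presentation, but you explicitly flag this as ``the main obstacle'' and supply no actual computation: no identification of the complex, no acyclicity argument after $\nn$-saturation under the mere hypothesis $G_{d+1}$, and no coefficient extraction producing the binomial sum. Since this is the entire mathematical content of the case $0\le i\le d-3$, what you have is a plan rather than a proof. (Your treatment of the range $d-2\le i\le d$ is fine in spirit: after cutting by $i\ge d-2$ general hyperplanes the ideal becomes $\mm S_i$-primary because $\HT(I)=3$, which is exactly \autoref{lem_find_filt_reg_rat_map}$(ii)$.)
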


\medskip

The basic outline of this paper is as follows.
In \autoref{sect2}, we introduce the notion of mixed multiplicities in terms of Hilbert series. 
In \autoref{sect3}, we relate this notion with the usual mixed multiplicities in terms of Hilbert polynomials. 
In \autoref{sect4}, we consider the multidegrees of multiprojective schemes and we prove van der Waerden's original result \cite{VAN_DER_WAERDEN} in a multigraded setting.
In \autoref{sect5}, we concentrate on the projective degrees of rational maps and we obtain the formulas of \autoref{thmB} and \autoref{thmC}.

\section{Mixed multiplicities of multigraded modules via Hilbert series}
\label{sect2}

During this section, we study and develop a definition of mixed multiplicities that depends on Hilbert series. 
The results in this section can be seen as a natural continuation of \cite[\S 8.5]{MILLER_STURMFELS}; in particular, our approach does not depend on the existence of finite free resolutions. 
We begin by fixing some notation and recalling some basic facts.

We use a multi-index notation.
For any $\nu \in \ZZ^r$, we define its weight as $\lvert \nu \rvert= \nu_1+\cdots+\nu_r$. 
If $\mu,\nu \in \ZZ^r$ are two multi-indexes, we write $\mu \ge \nu$ whenever $\mu_i \ge \nu_i$, and $\mu > \nu$ whenever $\mu_i > \nu_i$.
For $1 \le i \le r$, let $\ee_i$ be the $i$-th elementary vector $\ee_i=\left(0,\ldots,1,\ldots,0\right)$.
Let $\mathbf{0} \in \NN^r$ and $\mathbf{1} \in \NN^r$ be the vectors $\mathbf{0}=(0,\ldots,0)$ and $\mathbf{1}=(1,\ldots,1)$ of $r$ copies of $0$ and $1$, respectively.

The following setup is fixed throughout this section. 

\begin{setup}
	\label{setup_general}
	Let $(A,\nnn,\kk)$ be an Artinian local ring with maximal ideal $\nnn$ and residue field $\kk=A/\nnn$. 
	Let $\BB$ be a finitely generated standard $\NN^r$-graded algebra over $A$, that is,  $\left[\BB\right]_{\mathbf{0}}=A$ and $\BB$ is finitely generated over $A$ by elements of degree $\ee_i$ with $1 \le i \le r$.
\end{setup} 

For any $\ZZ^r$-graded $\BB$-module $\bmm$, its graded part of degree $\nu \in \ZZ^r$ is denoted by $\left[\bmm\right]_\nu$.
Unless specified otherwise, a graded $\BB$-module $\bmm$ will always means an $\ZZ^r$-graded $\BB$-module, that is, $\bmm=\bigoplus_{\nu \in \ZZ^r} \left[\bmm\right]_\nu$.
If $\nu \in \ZZ^r$ and $x \in \left[\BB\right]_\nu$, we say that $x$ is homogeneous of degree $\nu$ and its total degree is $\lvert \nu \rvert$.
Let $\MM \subset \BB$ be the graded ideal $\MM := \bigoplus_{\nu\neq \mathbf{0}} \left[\BB\right]_\nu$, and for $1 \le i \le r$ let $\MM_i \subset \BB$ be the graded ideal $\MM_i := \bigoplus_{\nu\ge \ee_i} \left[\BB\right]_\nu$.
In this multigraded setting the irrelevant ideal is defined as the graded ideal
$$
\nn := \MM_1 \cap \cdots \cap \MM_r = \bigoplus_{\nu > \mathbf{0}} \left[\BB\right]_\nu.
$$

Similarly to the single-graded case, we can define a multiprojective scheme from $\BB$.
The multiprojective scheme  $\multProj(\BB)$ is given by 
the set of all graded prime ideals in $\BB$ which do not contain $\nn$, that is,
$$
\multProj(\BB) := \big\{ \pp \in \Spec(\BB) \mid \pp \text{ is graded and } \pp \not\supseteq \nn \big\},
$$
and its scheme structure is obtained by using multi-homogeneous localizations (see, e.g., \cite[\S 1]{HYRY_MULTIGRAD}). 
The closed subsets of  $\multProj(\BB)$ are given by $V_{++}(\JJ):=V(\JJ)\cap \multProj(\BB) = \lbrace \pp \in \multProj(\BB) \mid \pp \supseteq \JJ \rbrace$ for $\JJ \subset \BB$ a graded ideal.
For a finitely generated graded $\BB$-module $\bmm$, set $\Supp_{++}\left(\bmm\right)$ to be the closed subset $\Supp_{++}\left(\bmm\right):=\Supp(\bmm) \cap \multProj(\BB)=\lbrace \pp \in \multProj(\BB) \mid \bmm_\pp \neq 0\rbrace=V_{++}(\Ann(\bmm))$.
Given a graded $\BB$-module $\bmm$, we denote as $\bmm^\gr$ the single-graded module given as
$$
\bmm^\gr := \bigoplus_{n \in \ZZ} \left(\bigoplus_{\substack{\nu \in \ZZ^r\\ \lvert \nu \rvert = n}} \left[\bmm\right]_\nu\right).
$$
We have that $\bmm^\gr$ is naturally a graded module over the single-graded $A$-algebra $\BB^\gr$.

For any $\bn \in \ZZ^r$, the terms $\ttt^\bn$ and $(\mathbf{1-t})^\bn$ represent the elements $t_1^{n_1}\cdots t_r^{n_r}$ and $(1-t_1)^{n_1}\cdots (1-t_r)^{n_r}$, respectively.
Note that any graded part $\left[\bmm\right]_\nu$ of a finitely generated graded $\BB$-module $\bmm$ is a finitely generated module over the Artinian local ring $A$, and so a module of finite length.
The length of a finitely generated $A$-module $E$ is denoted as $\text{length}_A(E)$.
The Hilbert series of a finitely generated graded $\BB$-module $\bmm$ is denoted as $\Hilb_\bmm(\ttt)$ and given by 
$$
\Hilb_\bmm(\ttt) := \sum_{\nu \in \ZZ^r} \text{length}_A\big(\left[\bmm\right]_\nu\big) \ttt^\nu.
$$
For any $\nu \in \ZZ^r$, $\bmm(\nu)$ denotes the shifted graded $\BB$-module given as $\left[\bmm(\nu)\right]_\mu = \left[\bmm\right]_{\nu+\mu}$.
Also, we have that $\Hilb_{\bmm(-\nu)}(\ttt)=\ttt^\nu\Hilb_\bmm(\ttt)$.

The following theorem gives a structural result for the Hilbert series of a finitely generated graded $\BB$-module.

\begin{theorem}
	\label{thm_struc_Hilb_series}
	Assume \autoref{setup_general}.
	Let $\bmm$ be a finitely generated graded $\BB$-module.
	Let $d=\dim(\bmm)$.
	Then, for each $\bn \in \NN^r$ with $\vert \bn \rvert = d$, there exists a Laurent polynomial $Q_\bn(\ttt) \in \ZZ[\ttt,\ttt^{\mathbf{-1}}]$ with $Q_\bn(\mathbf{1}) \ge 0$, such that
	$$
	\Hilb_\bmm(\ttt) = \sum_{\lvert\bn\rvert = d} \frac{Q_\bn(\ttt)}{(\mathbf{1-t})^\bn},
	$$
	and the following statements hold:
	\begin{enumerate}[(i)]
		\item There is at least one $\bn \in \NN^r$ with $\lvert \bn \rvert = d$ such that $Q_\bn(\mathbf{1}) > 0$.
		\item If $Q_\bn(\ttt)\neq 0$ and $Q_\bn(\mathbf{1})=0$, then $Q_\bn(\ttt)$ is divisible by $(1-t_i)$ for some $1 \le i \le r$ such that $n_i\neq 0$.
	\end{enumerate}
	
	\begin{proof}
		We proceed by induction on $d$.
		
		For the statement in part $(ii)$, we will actually prove that, if $Q_\bn(\ttt)\neq 0$ and $Q_\bn(\mathbf{1})=0$, then $\bn \ge \ee_1$ and $(1-t_1)$ divides $Q_\bn(\ttt)$.
		
		There exists a finite filtration 
		$$
		0 = \bmm_0 \subset \bmm_1 \subset \cdots \subset \bmm_k=\bmm
		$$
		of $\bmm$ such that $\bmm_l/\bmm_{l-1} \cong \left(\BB/\pp_l\right)(-\nu_l)$ where $\pp_l \subset \BB$ is a graded prime ideal with dimension $\dim(\BB/\pp_l)\le d$ and $\nu_l \in \ZZ^r$.
		The short exact sequences
		\begin{equation*}
		0 \rightarrow \bmm_{l-1} \rightarrow \bmm_l \rightarrow \left(\BB/\pp_l\right)(-\nu_l) \rightarrow 0		
		\end{equation*}
		and the clear additivity of Hilbert series yield that 
		$$
		\Hilb_\bmm(\ttt) = \sum_{l=1}^k \ttt^{\nu_l} \Hilb_{\BB/\pp_l}(\ttt).
		$$
		Let $d_l=\dim\left(\BB/\pp_l\right)$ and suppose for the moment that we have shown that 
		$$
		\Hilb_{\BB/\pp_l}(\ttt) = \sum_{\lvert\bn\rvert = d_l} \frac{Q_\bn^{(l)}(\ttt)}{(\mathbf{1-t})^\bn},
		$$
		where $Q_\bn^{(l)}(\ttt) \in \ZZ[\ttt,\ttt^{\mathbf{-1}}]$ satisfy the same conditions of parts $(i)$ and $(ii)$ (with $i=1$).
		By an abuse of notation, for any $\bm \not\ge \mathbf{0}$ we set $Q_{\bm}^{(l)}(\ttt)=0$.
		Then, we would get the equation
		\begin{align*}
			\begin{split}
			\Hilb_\bmm(\ttt) &= \sum_{l=1}^k \ttt^{\nu_l}\frac{(1-t_1)^{d-d_l}}{(1-t_1)^{d-d_l}} \Hilb_{\BB/\pp_l}(\ttt)	 \\
			&= 	\sum_{\lvert\bn\rvert = d} 
			\frac{
				\sum_{l=1}^k\ttt^{\nu_l}(1-t_1)^{d-d_l}Q_{\bn-(d-d_l)\ee_1}^{(l)}(\ttt)
			}
			{(\mathbf{1-t})^\bn}.
			\end{split}			
		\end{align*}		
		
		Therefore, it is enough to consider the case $\bmm=\BB/\pp$ where $\pp \subset \BB$ is a graded prime ideal.		
		Suppose that $d=\dim\left(\BB/\pp\right)=0$.
		Since $A$ is an Artinian local ring, it follows that $\pp$ is equal to the unique maximal graded ideal $\nnn + \MM$.
		Hence, it follows that $\Hilb_{\BB/\pp}(\ttt)=1$, and the result is clear.
		
		Suppose that $d=\dim\left(\BB/\pp\right)>0$.
		Then, we may choose a homogeneous element $0 \neq x \in \BB/\pp$ of degree $\deg(x)=\ee_i$ where $1 \le i \le r$.
		From the short exact sequence
		$$
		0 \rightarrow \left(\BB/\pp\right)(-\ee_i) \xrightarrow{x} \BB/\pp \rightarrow \BB/(x,\pp) \rightarrow 0
		$$
		we obtain the equation
		$$
		\Hilb_{\BB/(x,\pp)}(\ttt) = \Hilb_{\BB/\pp}(\ttt)-t_i\Hilb_{\BB/\pp}(\ttt)=(1-t_i)\Hilb_{\BB/\pp}(\ttt).
		$$
		Since $\dim\left(\BB/(x,\pp)\right)=d-1$, from the induction hypothesis we may assume that 
		$$
		\Hilb_{\BB/(x,\pp)}(\ttt) = \sum_{\lvert\bn\rvert = d-1} \frac{Q_\bn(\ttt)}{(1-t_1)^{n_1}\cdots(1-t_r)^{n_r}},
		$$
		where $Q_\bn(\ttt) \in \ZZ[\ttt,\ttt^{\mathbf{-1}}]$ satisfy the same conditions of parts $(i)$ and $(ii)$ (with $i=1$).
		Thus, the equation $\Hilb_{\BB/\pp}(\ttt) = \frac{1}{1-t_i}\Hilb_{\BB/(x,\pp)}(\ttt)$ implies that 
		$$
		\Hilb_{\BB/\pp}(\ttt) = \sum_{\lvert\bn\rvert = d-1} \frac{Q_\bn(\ttt)}{(1-t_1)^{n_1}\cdots(1-t_i)^{n_i+1}\cdots(1-t_r)^{n_r}},
		$$
		and so the statement of the theorem is obtained.		
	\end{proof}
\end{theorem}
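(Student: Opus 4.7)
The plan is to proceed by induction on $d = \dim(\bmm)$ after first reducing to the case $\bmm = \BB/\pp$ for a graded prime $\pp$. I would invoke the standard prime filtration $0 = \bmm_0 \subset \cdots \subset \bmm_k = \bmm$ with $\bmm_l/\bmm_{l-1} \cong (\BB/\pp_l)(-\nu_l)$ and $d_l := \dim(\BB/\pp_l) \le d$. Additivity of Hilbert series on short exact sequences yields $\Hilb_\bmm(\ttt) = \sum_l \ttt^{\nu_l}\,\Hilb_{\BB/\pp_l}(\ttt)$, and assuming the decomposition is known for each $\BB/\pp_l$ with denominators of total weight $d_l$, I would normalize each summand to weight $d$ by multiplying numerator and denominator by $(1-t_1)^{d-d_l}$ and then group by denominator pattern.

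For the base case $d = 0$, a graded prime $\pp$ with $\dim(\BB/\pp) = 0$ must equal $\nnn + \MM$: if some $\MM_i \not\subseteq \pp$, then multiplication by any representative of degree $\ee_i$ acts injectively on the domain $\BB/\pp$, producing infinitely many independent homogeneous components and contradicting $\dim = 0$. Hence $\BB/\pp \cong \kk$ and $\Hilb_{\BB/\pp}(\ttt) = 1$, which is trivially of the desired form. For the inductive step on $\BB/\pp$ with $d > 0$, I would choose a nonzero homogeneous element $x \in \BB/\pp$ of degree $\ee_i$ for some $i$ (whose existence follows because $\pp \not\supseteq \MM$ by the same argument). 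The short exact sequence $0 \to (\BB/\pp)(-\ee_i) \xrightarrow{x} \BB/\pp \to \BB/(x,\pp) \to 0$ gives
$$
(1-t_i)\,\Hilb_{\BB/\pp}(\ttt) = \Hilb_{\BB/(x,\pp)}(\ttt),
$$
and since $\dim\bigl(\BB/(x,\pp)\bigr) = d - 1$, the inductive hypothesis supplies the decomposition of the right-hand side, which becomes one of total weight $d$ for $\Hilb_{\BB/\pp}(\ttt)$ after dividing by $(1-t_i)$ and re-indexing $\bn \mapsto \bn + \ee_i$.

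For statements (i) and (ii) I would carry a slightly sharpened form of (ii) through the induction: whenever $Q_\bn(\ttt) \ne 0$ and $Q_\bn(\mathbf{1}) = 0$, one has $\bn \ge \ee_1$ and $(1-t_1) \mid Q_\bn(\ttt)$; that is, the index $i$ can always be taken to be $1$. This strengthened version is stable under both operations used above: the padding factor $(1-t_1)^{d-d_l}$ introduced in the filtration step automatically forces $n_1 \ge d - d_l \ge 1$ and supplies the $(1-t_1)$ divisor whenever $d_l < d$, while the division by $(1-t_i)$ in the inductive step leaves the underlying numerators intact and only shifts the index in coordinate $i$. For (i), every contribution coming from a prime with $d_l < d$ vanishes at $\ttt = \mathbf{1}$, so $Q_\bn(\mathbf{1})$ reduces to the sum of the non-negative contributions $Q_\bn^{(l)}(\mathbf{1})$ over those $l$ with $d_l = d$; these cannot all vanish because the inductive hypothesis applied to each such $\BB/\pp_l$ produces at least one strictly positive value, and non-negative summands do not cancel.

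The main obstacle I anticipate is precisely the bookkeeping for (ii): without fixing $i = 1$ as the universal denominator factor, summing contributions from different primes with different favored indices could destroy the divisibility structure in unpredictable ways. Committing to $i = 1$ throughout, at the cost of a slightly asymmetric intermediate statement, is what threads the induction cleanly through the reduction to $\BB/\pp$.
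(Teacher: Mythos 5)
Your proposal is correct and follows essentially the same route as the paper: prime filtration to reduce to $\BB/\pp$, the base case $\pp = \nnn + \MM$, the short exact sequence argument for the inductive step on primes, the same padding-by-$(1-t_1)^{d-d_l}$ trick, and the same strengthened form of (ii) with the distinguished index $i=1$ carried through the induction. You even flag the same bookkeeping subtlety for (ii) that the paper handles by committing to $i=1$ throughout.
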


Although the decomposition of the Hilbert series $\Hilb_\bmm(\ttt)$ given in \autoref{thm_struc_Hilb_series} is not necessarily unique, we can easily see from the simple lemma below that the values of $Q_\bn(\mathbf{1})$ are uniquely determined by the module $\bmm$.

\begin{lemma}
	\label{lem_unique_values}
	Suppose that 
	$$
	\sum_{\lvert\bn\rvert = d} \frac{Q_\bn(\ttt)}{(\mathbf{1-t})^\bn} = \sum_{\lvert\bn\rvert = d} \frac{Q_\bn^\prime(\ttt)}{(\mathbf{1-t})^\bn}
	$$
	where $\bn \in \NN^r$ and $Q_\bn(\ttt), Q_\bn^\prime(\ttt) \in \ZZ[\ttt,\ttt^{\mathbf{-1}}]$. 
	Then, we have that $Q_\bn(\mathbf{1})=Q_\bn^\prime(\mathbf{1})$ for all $\bn \in \NN^r$ with $\lvert \bn \rvert = d$.
	\begin{proof}
		Take $\bm \in \NN^r$ such that $P_\bn(\ttt)=\ttt^\bm Q_\bn(\ttt) \in \ZZ[\ttt]$ and $P_\bn^\prime(\ttt)=\ttt^\bm Q_\bn^\prime(\ttt) \in \ZZ[\ttt]$, i.e., $P_\bn(\ttt)$ and $P_\bn^\prime(\ttt)$ become polynomials in $\ttt$.
		Multiplying both sides of the above equation by $(\mathbf{1-t})^\dd\ttt^\bm$ (where $\dd=(d,d,\ldots,d) \in \NN^r$) gives us that 
		$$
		\sum_{\lvert\bn\rvert = d} P_\bn(\ttt)(\mathbf{1-t})^{\dd-\bn} = \sum_{\lvert\bn\rvert = d} P_\bn^\prime(\ttt)(\mathbf{1-t})^{\dd-\bn}.
		$$
		Then, the substitution $\ttt \mapsto \mathbf{1-t}$ yields 
		$$
		\sum_{\lvert\bn\rvert = d} P_\bn(\mathbf{1-t})\ttt^{\dd-\bn} = \sum_{\lvert\bn\rvert = d} P_\bn^\prime(\mathbf{1-t})\ttt^{\dd-\bn}.
		$$
		Since both sums above are restricted to the multi-indexes $\bn \in \NN^r$ with $\lvert \bn \rvert=d$, by comparing the terms of the smallest possible degree $\lvert \dd -\bn \rvert=(r-1)d$, we obtain that $Q_\bn(\mathbf{1})=P_\bn(\mathbf{1})=P_\bn^\prime(\mathbf{1})=Q_\bn^\prime(\mathbf{1})$, and so the result follows.
	\end{proof}
\end{lemma}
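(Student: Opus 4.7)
The plan is to clear denominators, perform a change of variables that swaps the roles of $\ttt$ and $\mathbf{1-t}$, and then read off the values $Q_\bn(\mathbf{1})$ from the coefficients of the lowest-degree monomials in the resulting polynomial identity. By linearity it suffices to treat the case where $Q'_\bn = 0$, i.e., to show that
$$\sum_{\lvert \bn \rvert = d} \frac{Q_\bn(\ttt)}{(\mathbf{1-t})^\bn} \;=\; 0 \quad\Longrightarrow\quad Q_\bn(\mathbf{1}) = 0 \text{ for all } \bn.$$

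First, I would pick a single $\bm \in \NN^r$ large enough that $P_\bn(\ttt) := \ttt^\bm Q_\bn(\ttt) \in \ZZ[\ttt]$ is a genuine polynomial for every $\bn$ with $\lvert \bn \rvert = d$ (there are finitely many). Multiplying the hypothesis by $\ttt^\bm (\mathbf{1-t})^{\dd}$, where $\dd = (d,\ldots,d) \in \NN^r$, converts it into the polynomial identity
$$\sum_{\lvert \bn \rvert = d} P_\bn(\ttt)\,(\mathbf{1-t})^{\dd-\bn} \;=\; 0 \qquad \text{in } \ZZ[\ttt].$$
Note that $\dd - \bn \in \NN^r$ because $n_i \le \lvert \bn \rvert = d$, so every exponent appearing here is non-negative.

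Next, I would perform the substitution $\ttt \mapsto \mathbf{1-t}$, which is an automorphism of $\ZZ[\ttt]$. The identity becomes
$$\sum_{\lvert \bn \rvert = d} P_\bn(\mathbf{1-t})\,\ttt^{\dd-\bn} \;=\; 0.$$
Every monomial $\ttt^{\dd-\bn}$ on the left-hand side has total degree $\lvert \dd - \bn \rvert = rd - d = (r-1)d$, and the map $\bn \mapsto \dd - \bn$ is injective on $\{\bn \in \NN^r : \lvert \bn \rvert = d\}$, so these monomials are pairwise distinct. The key observation is that $P_\bn(\mathbf{1-t}) = P_\bn(\mathbf{1}) + (\text{terms of positive degree in } \ttt)$, so in the product $P_\bn(\mathbf{1-t})\ttt^{\dd-\bn}$ the only contribution to total degree exactly $(r-1)d$ is $P_\bn(\mathbf{1})\,\ttt^{\dd-\bn}$.

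Extracting the homogeneous component of degree $(r-1)d$ from the polynomial identity above therefore yields
$$\sum_{\lvert \bn \rvert = d} P_\bn(\mathbf{1})\,\ttt^{\dd-\bn} \;=\; 0,$$
and since the monomials involved are linearly independent, $P_\bn(\mathbf{1}) = 0$ for each $\bn$. Evaluating $P_\bn(\ttt) = \ttt^\bm Q_\bn(\ttt)$ at $\ttt = \mathbf{1}$ gives $P_\bn(\mathbf{1}) = Q_\bn(\mathbf{1})$, which concludes the argument. The only subtle step is the bookkeeping that guarantees distinctness of the monomials $\ttt^{\dd-\bn}$ and the fact that higher-order terms of $P_\bn(\mathbf{1-t})$ cannot contribute to the degree-$(r-1)d$ component; the rest is a straightforward manipulation of formal power series and polynomials.
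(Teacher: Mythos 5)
Your proposal is correct and follows essentially the same route as the paper's proof: clear denominators with $\ttt^\bm(\mathbf{1-t})^\dd$, substitute $\ttt\mapsto\mathbf{1-t}$, and extract the homogeneous component of smallest total degree $(r-1)d$. The extra details you supply (distinctness of the monomials $\ttt^{\dd-\bn}$ and the reduction to $Q'_\bn=0$) are just more explicit bookkeeping of the same argument.
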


We are now ready to define the following notion of mixed multiplicities in a multigraded setting.

\begin{definition}
	\label{def_mixed_mult}
	Assume \autoref{setup_general}. 
	Let $\bmm$ be a finitely generated graded $\BB$-module. 
	Let $d=\dim(\bmm)$. 
	From \autoref{thm_struc_Hilb_series} choose any decomposition 
	$$
	\Hilb_\bmm(\ttt) = \sum_{\substack{\bn \in \NN^r\\\lvert\bn\rvert = d}} \frac{Q_\bn(\ttt)}{(\mathbf{1-t})^\bn}.
	$$
	For any $\bn=(n_1,\ldots,n_r) \in \ZZ^r$ with $\bn \ge \mathbf{-1} = (-1,\ldots,-1) \in \ZZ^r$ and $\lvert \bn+\mathbf{1} \rvert \ge d$, the \textit{mixed multiplicity of $\bmm$ of type $\bn$ defined in terms of Hilbert series} is given by 
	$$
	e_\bn\left(\bmm\right) := \begin{cases}
		Q_{\bn+\mathbf{1}}(\mathbf{1}) \quad\text{ if } \lvert \bn + \mathbf{1} \rvert = d\\
		0 \qquad\qquad\,\,\; \text{if } \lvert \bn + \mathbf{1} \rvert > d.
	\end{cases} 
	$$	
	From \autoref{lem_unique_values}, the mixed multiplicities $e_\bn\left(\bmm\right)$ are uniquely determined by $\bmm$.
\end{definition}

\begin{remark}
	In the above definition we have chosen to enumerate the mixed multiplicities for multi-indexes with $\bn \ge \mathbf{-1}$.
	We made this choice so that we can relate $e_\bn(\bmm)$ with the usual definition in terms of Hilbert polynomials for $\bn\ge \mathbf{0}$ (see \autoref{thm_equality_mults} below).
	
	For instance, let $\KK$ be a field and $S=\KK\left[x_1,\ldots,x_n,y_1,\ldots,y_m\right]$ be a bigraded polynomial ring with $\deg(x_i)=(1,0)$ and $\deg(y_i)=(0,1)$.
	Then, the Hilbert polynomial and the Hilbert series of $S$ are given by 
	$$
	P_S(\nu_1,\nu_2)=\binom{\nu_1+n-1}{n-1}\binom{\nu_2+m-1}{m-1}
	$$
	and
	$$
	\Hilb_S(t_1,t_2) = \frac{1}{(1-t_1)^n(1-t_2)^m},
	$$
	respectively.
	Therefore, from both definitions, we obtain that $e_{n-1,m-1}(S)=e(n-1,m-1;S)=1$ and that $e_{i,j}(S)=e(i,j;S)=0$ for $i,j\ge0,\; i+j=n+m-2$.
\end{remark}

\begin{remark}
	In \autoref{def_mixed_mult} we allow the flexibility of having $\lvert \bn + \mathbf{1}\rvert \ge d$ so that the function $e_\bn(\bullet)$ becomes additive in the full subcategory of finitely generated graded $\BB$-modules with dimension at most $\lvert \bn + \mathbf{1} \rvert$ (for the same setting in the single-graded case, see, e.g., \cite[Corllary 4.7.7]{BRUNS_HERZOG}). 
\end{remark}

Next we derive some basic properties of the mixed multiplicities $e_\bn(\bullet)$.  

\begin{lemma}
	\label{lem_basic_props}
	Let $\bmm$ be a finitely generated graded $\BB$-module with $\dim(\BB)=d$.
	Let $\bn \in \ZZ^r$ such that $\bn \ge \mathbf{-1}$ and $\vert \bn + \mathbf{1} \rvert \ge d$.
	Then, the following statements hold:
	\begin{enumerate}[(i)]
		\item For any $\nu \in \ZZ$, we have that $e_\bn\left(\bmm(-\nu)\right)=e_\bn(\bmm)$.
		\item (additivity) Let $0 \rightarrow \bmm^\prime \rightarrow \bmm \rightarrow \bmm^{\prime\prime} \rightarrow 0$ be a short exact sequence of finitely generated graded $\BB$-modules.
		Then
		$$
		e_\bn(\bmm) = e_\bn(\bmm^{\prime}) + e_\bn(\bmm^{\prime\prime}).
		$$
		\item (associativity formula) 
		$$
		e_\bn(\bmm) = \sum_{\substack{\pp \in \Supp(\bmm)\\ \dim(\BB/\pp)=d}} \text{\normalfont length}_{\BB_\pp}\big(\bmm_\pp\big)\,e_\bn\left(\BB/\pp\right).
		$$
	\end{enumerate}
	\begin{proof}
		$(i)$ It follows from the fact that $\Hilb_{\bmm(-\nu)}(\ttt)=\ttt^\nu\Hilb_\bmm(\ttt)$.
		
		$(ii)$ Since 
		$$\Hilb_\bmm(\ttt)=\frac{(1-t_1)^{d-\dim(\bmm^\prime)}}{(1-t_1)^{d-\dim(\bmm^\prime)}}\Hilb_{\bmm^{\prime}}(\ttt)+\frac{(1-t_1)^{d-\dim(\bmm^{\prime\prime})}}{(1-t_1)^{d-\dim(\bmm^{\prime\prime})}}\Hilb_{\bmm^{\prime\prime}}(\ttt),
		$$
		the result is obtained from \autoref{thm_struc_Hilb_series} and \autoref{lem_unique_values}.
		
		$(iii)$ Take a finite filtration 
		$$
		0 = \bmm_0 \subset \bmm_1 \subset \cdots \subset \bmm_n=\bmm
		$$
		of $\bmm$ such that $\bmm_l/\bmm_{l-1} \cong \left(\BB/\pp_l\right)(-\nu_l)$ where $\pp_l \subset \BB$ is a graded prime ideal with dimension $\dim(\BB/\pp_l)\le d$ and $\nu_l \in \ZZ^r$.
		From parts $(i)$ and $(ii)$, it follows that $e_\bn(\bmm)=\sum_{l=1}^ne_\bn\left(\BB/\pp_l\right)$.
		The inequality $\dim\left(\BB/\pp_l\right)\le d-1$ implies that $e_\bn\left(\BB/\pp_l\right)=0$.
		On the other hand, for any $\pp \in \Supp(\bmm)$ with $\dim\left(\BB/\bmm\right)=d$, the cardinality of the set $\{ l \mid \pp = \pp_l \} \subseteq \{1,2,\ldots,n\}$ is equal to $\text{length}_{\BB_\pp}\big(\bmm_\pp\big)$.
		So, the result follows.
	\end{proof}
\end{lemma}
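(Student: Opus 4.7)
The plan is to treat the three parts sequentially, with part (ii) doing most of the real work and part (iii) then following by a standard prime filtration argument.

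For (i), observe that $\Hilb_{\bmm(-\nu)}(\ttt) = \ttt^\nu \Hilb_\bmm(\ttt)$. Starting from any decomposition $\Hilb_\bmm(\ttt) = \sum_{\lvert\bm\rvert=\dim(\bmm)} Q_\bm(\ttt)/(\mathbf{1-t})^\bm$ supplied by \autoref{thm_struc_Hilb_series}, multiplying every numerator by $\ttt^\nu$ produces a decomposition of $\Hilb_{\bmm(-\nu)}(\ttt)$ of exactly the same shape with new numerators $\ttt^\nu Q_\bm(\ttt)$. Since $\ttt^\nu$ evaluates to $1$ at $\ttt = \mathbf{1}$, the values $Q_\bm(\mathbf{1})$ are unchanged, and \autoref{lem_unique_values} immediately yields $e_\bn(\bmm(-\nu)) = e_\bn(\bmm)$.

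For (ii), the key identity is $\Hilb_\bmm(\ttt) = \Hilb_{\bmm'}(\ttt) + \Hilb_{\bmm''}(\ttt)$. I would write each of the three Hilbert series via \autoref{thm_struc_Hilb_series} in its natural form at its own dimension, and then normalize every summand so that all denominators take the shape $(\mathbf{1-t})^\bm$ with $\lvert\bm\rvert = \lvert\bn+\mathbf{1}\rvert$. This is achieved by multiplying numerator and denominator of each summand by the appropriate power of $(1-t_1)$, exactly as in the proof of \autoref{thm_struc_Hilb_series}. Once all three decompositions share this common denominator shape, \autoref{lem_unique_values} forces the numerator values at $\mathbf{1}$ to be additive, giving $e_\bn(\bmm) = e_\bn(\bmm') + e_\bn(\bmm'')$. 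The main subtlety, and where the bookkeeping is most delicate, is consistency with the definition when $\dim(\bmm')$ or $\dim(\bmm'')$ is strictly smaller than $\lvert\bn+\mathbf{1}\rvert$: in that regime an honest factor of $(1-t_1)$ is absorbed into the $Q_{\bn+\mathbf{1}}$-numerator and kills its value at $\mathbf{1}$, which agrees with the convention that $e_\bn(\bullet) = 0$ in dimension below $\lvert\bn+\mathbf{1}\rvert$. Checking this precisely is the hardest step of the lemma.

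For (iii), I would invoke the standard graded prime filtration $0 = \bmm_0 \subset \bmm_1 \subset \cdots \subset \bmm_n = \bmm$ with quotients $\bmm_l/\bmm_{l-1} \cong (\BB/\pp_l)(-\nu_l)$ for graded primes $\pp_l \subset \BB$ and some $\nu_l \in \ZZ^r$. Parts (i) and (ii) telescope this filtration to $e_\bn(\bmm) = \sum_{l=1}^n e_\bn(\BB/\pp_l)$. Summands with $\dim(\BB/\pp_l) < d$ vanish because $\lvert\bn+\mathbf{1}\rvert \ge d$ and the definition forces $e_\bn(\BB/\pp_l) = 0$ once $\lvert\bn+\mathbf{1}\rvert$ exceeds the dimension. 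For a fixed prime $\pp$ with $\dim(\BB/\pp) = d$, localizing the filtration at $\pp$ collapses to precisely those steps where $\pp_l = \pp$, because any $\pp_l \not\subseteq \pp$ becomes the unit ideal after localization while any $\pp_l \subsetneq \pp$ of maximal dimension cannot occur. The cardinality of these surviving steps equals $\text{length}_{\BB_\pp}(\bmm_\pp)$; grouping the non-vanishing terms by these top-dimensional primes yields the associativity formula.
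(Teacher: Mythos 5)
Your proposal is correct and follows essentially the same route as the paper: part (i) via the shift identity for Hilbert series, part (ii) via additivity of Hilbert series together with a normalization by powers of $(1-t_1)$ (exactly the manipulation from the proof of \autoref{thm_struc_Hilb_series}) and then \autoref{lem_unique_values}, and part (iii) via a graded prime filtration reduced to parts (i)--(ii). The only cosmetic difference is that you normalize to a common denominator weight $\lvert\bn+\mathbf{1}\rvert$ while the paper normalizes to $d=\dim(\bmm)$; since terms of weight above $\dim$ contribute zero by \autoref{def_mixed_mult}, the two bookkeeping conventions agree, and you correctly flag and resolve this edge case.
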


The following theorem is a generalization of the results of \cite[Theorem 4.3]{HERMANN_MULTIGRAD} and \cite[Theorem 4.1]{VERMA_BIGRAD}.
Additionally, it characterizes when multiplicities of negative type are positive, that is, when $e_\bn(\bmm)>0$ for $\bn \not\ge \mathbf{0}$ and some finitely generated graded $\BB$-module $\bmm$. 

\begin{theorem}
	\label{thm_mult_gr_mixed}
	Assume \autoref{setup_general}.
	Let $\bmm$ be a finitely generated graded $\BB$-module with $\dim(\bmm)=d$.
	Then, the following statements hold:
	\begin{enumerate}[(i)]
		\item We have the equality
		$$
		e\big(\bmm^\gr\big) = \sum_{\substack{\bn \ge \mathbf{-1}\\ \lvert\bn+\mathbf{1}\rvert=d}} e_\bn\left(\bmm\right).
		$$ 
		\item $\lbrace \pp \in \Supp\left(\bmm\right) \mid \dim(\BB/\pp) = d \rbrace \cap V(\nn) \neq \emptyset$ if and only if $
		e_\bn(\bmm) >  0
		$
		for some $\bn \not\ge \mathbf{0}$.
	\end{enumerate}

	\begin{proof}
		$(i)$
		The Hilbert series $\Hilb_{\bmm^\gr}(t)$ of the single-graded module $\bmm^\gr$ can be (uniquely) written as $\Hilb_{\bmm^\gr}(t)=\frac{Q(t)}{(1-t)^d}$ for some $Q(t) \in \ZZ[t,t^{-1}]$ (see, e.g., \cite[\S 4.1]{BRUNS_HERZOG}), and can obtained from $\Hilb_\bmm(\ttt)$ by making the substitutions $t_i \mapsto t$ for $1 \le i \le r$.
		Hence, taking a decomposition of $\Hilb_\bmm(\ttt)$ from \autoref{thm_struc_Hilb_series} gives the equation 
		$$
		\frac{Q(t)}{(1-t)^d} = \frac{\sum_{\substack{\bn \ge \mathbf{-1}\\ \lvert\bn+\mathbf{1}\rvert=d}} Q_{\bn+\mathbf{1}}(t,\ldots,t)}{(1-t)^d}.
		$$
		Therefore, the equality 
		$$
		e\big(\bmm^\gr\big) = \sum_{\substack{\bn \ge \mathbf{-1}\\ \lvert\bn+\mathbf{1}\rvert=d}} e_\bn\left(\bmm\right)
		$$		
		follows from the fact that $e\left(\bmm^\gr\right)=Q(1)$ and $e_\bn\left(\bmm\right)=Q_{\bn+\mathbf{1}}(\mathbf{1})$.

		$(ii)$
		Suppose  that $e_\bm(\bmm)>0$ for some $\mathbf{-1} \le \bm \in \ZZ^r$ such that $\lvert \bm+\mathbf{1} \rvert = d$ and that $m_i=-1$ for some $1 \le i \le r$; and fix such $\bm$ and $i$.
		From \autoref{lem_basic_props}$(iii)$, there exists a prime $\pp \in \Supp(\bmm)$ with $\dim\left(\BB/\pp\right)=d$ such that $e_\bm\left(\BB/\pp\right)>0$.
		By using \autoref{thm_struc_Hilb_series} take a decomposition 
		$$
		\Hilb_{\BB/\pp}(\ttt) = \sum_{\substack{\bn \ge \mathbf{-1}\\\lvert\bn+\mathbf{1}\rvert = d}} \frac{Q_{\bn+\mathbf{1}}(\ttt)}{(\mathbf{1-t})^{\bn+\mathbf{1}}}.
		$$
		Since $e_\bm\left(\BB/\pp\right)>0$, it follows that $Q_{\bm+\mathbf{1}}(\ttt)\neq0$. 
		Let $\ttt^\prime=(t_1,\ldots,t_{i-1},t_{i+1},\ldots,t_r)$ and write $Q_{\bm+\mathbf{1}}(\ttt)=\sum_{j=0}^{c}t_i^jG_j(\ttt^\prime)$ where $G_j(\ttt^\prime) \in \ZZ[\ttt^\prime]$.
		
		For any $F(\ttt) = \sum_{\nu \ge \mathbf{0}} f_\nu t^\nu \in \ZZ[[\ttt]]$ power series we use the notation 
		$$
		{\big\langle F(\ttt)\big\rangle}_c^i := \sum_{\substack{\nu \ge \mathbf{0}\\ \nu_i \le c}} f_\nu t^\nu \in \ZZ[[\ttt]].
		$$
		Clearly, the function ${\big\langle \bullet\big\rangle}_c^i:\ZZ[[\ttt]] \rightarrow \ZZ[[\ttt]]$ is additive.
		
		From the following isomorphism 
		$$
		\BB/\left(\pp,\MM_i^{c+1}\right) \cong \bigoplus_{\substack{\nu \ge \mathbf{0}\\ \nu_i\le c}} \left[\BB/\pp\right]_\nu,
		$$
		we then obtain that 
		$$
		\Hilb_{\BB/\left(\pp,\MM_i^{c+1}\right)}(\ttt)
		={\big\langle\Hilb_{\BB/\pp}(\ttt)\big\rangle}_c^i = \sum_{\substack{\bn \ge \mathbf{-1}\\\lvert\bn+\mathbf{1}\rvert = d}} {\Big\langle\frac{Q_{\bn+\mathbf{1}}(\ttt)}{(\mathbf{1-t})^{\bn+\mathbf{1}}}\Big\rangle}_c^i.
		$$
		Since ${\Big\langle\frac{Q_{\bm+\mathbf{1}}(\ttt)}{(\mathbf{1-t})^{\bm+\mathbf{1}}}\Big\rangle}_c^i=\frac{Q_{\bm+\mathbf{1}}(\ttt)}{(\mathbf{1-t})^{\bm+\mathbf{1}}}$, \autoref{lem_unique_values} and \autoref{thm_struc_Hilb_series} imply that 
		$$
		\dim\left(\BB/\left(\pp,\MM_i^{c+1}\right)\right) = \dim\left(\BB/\pp\right)=d.
		$$
		But then we get the containment $\pp \supseteq \MM_i \supseteq \nn$ because $\pp$ is a prime ideal.
		Therefore, it follows that $\lbrace \pp \in \Supp\left(\bmm\right) \mid \dim(\BB/\pp) = d \rbrace \cap V(\nn) \neq \emptyset$.
		
		For the reverse implication, assume that 
		$$
		Z = \lbrace \pp \in \Supp\left(\bmm\right) \mid \dim(\BB/\pp) = d \rbrace \cap V(\nn) \neq \emptyset,
		$$ 
		and choose $\pp \in Z$.
		Since $\pp \supseteq \nn$, $\left[\BB/\pp\right]_{\nu}=0$ for $\nu > \mathbf{0}$ and so it follows that a decomposition from \autoref{thm_struc_Hilb_series} can be written as
		$$
		\Hilb_{\BB/\pp}(\ttt) = \sum_{\bn \in \Sigma} \frac{Q_{\bn+\mathbf{1}}(\ttt)}{(\mathbf{1-t})^{\bn+\mathbf{1}}}
		$$
		where $\Sigma = \lbrace \bn \in \ZZ^r \mid \bn \ge \mathbf{-1}, \lvert \bn + \mathbf{1}\rvert=d \text{ and } n_i=-1 \text{ for some } 1\le i \le r\rbrace$.
		Therefore, \autoref{lem_basic_props}$(iii)$ implies that $e_\bn(\bmm) > 0$ for some $\mathbf{-1} \le \bn \in \ZZ^r$ such that $\lvert \bn+\mathbf{1} \rvert = d$ and that $n_i=-1$ for some $1 \le i \le r$.
		
		So, the result of the theorem follows.
	\end{proof}
\end{theorem}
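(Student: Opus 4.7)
For part (i), the key observation is that $\Hilb_{\bmm^\gr}(t)$ is obtained from $\Hilb_\bmm(\ttt)$ by the substitution $t_i \mapsto t$ for all $i$, since this specialization collects graded parts of common total degree. Picking any decomposition
$$
\Hilb_\bmm(\ttt) = \sum_{\lvert\bn\rvert=d} \frac{Q_\bn(\ttt)}{(\mathbf{1-t})^\bn}
$$
from \autoref{thm_struc_Hilb_series} and specializing yields
$$
\Hilb_{\bmm^\gr}(t) = \frac{\sum_{\lvert\bn\rvert=d} Q_\bn(t,\ldots,t)}{(1-t)^d}.
$$
Since $e(\bmm^\gr)$ equals the numerator of this reduced expression evaluated at $t=1$, we read off $e(\bmm^\gr) = \sum Q_\bn(\mathbf{1})$, which by \autoref{def_mixed_mult} equals the required sum of $e_\bn(\bmm)$.

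For the reverse implication of part (ii), suppose $\pp \in \Supp(\bmm)$ satisfies $\dim(\BB/\pp) = d$ and $\pp \supseteq \nn$. Since $\nn = \MM_1 \cap \cdots \cap \MM_r$ and $\pp$ is prime, $\pp \supseteq \MM_i$ for some $i$, so $[\BB/\pp]_\nu = 0$ whenever $\nu_i > 0$; hence $\Hilb_{\BB/\pp}(\ttt)$ does not involve $t_i$. This allows one to write a decomposition in which every exponent $\bn+\mathbf{1}$ has $n_i+1 = 0$, so every contributing $\bn$ satisfies $n_i = -1$, i.e.\ $\bn \not\ge \mathbf{0}$. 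Part (i) applied to $\BB/\pp$ (whose single-graded multiplicity is positive because $\dim = d$) gives $e_\bn(\BB/\pp) > 0$ for some such $\bn$, and associativity (\autoref{lem_basic_props}(iii)) lifts this to $e_\bn(\bmm) > 0$.

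For the forward implication of part (ii) --- which I expect to be the main obstacle --- I first use associativity to reduce to $\bmm = \BB/\pp$ with $\pp$ a graded prime of dimension $d$ satisfying $e_\bn(\BB/\pp) > 0$ for some $\bn$ with $n_i = -1$, and aim to deduce $\pp \supseteq \MM_i$. The strategy is a truncation argument: for $c \gg 0$, the Hilbert series of $\BB/(\pp + \MM_i^{c+1})$ is the truncation of $\Hilb_{\BB/\pp}(\ttt)$ to multi-indices with $\nu_i \le c$. In any decomposition from \autoref{thm_struc_Hilb_series}, the summand $Q_{\bn+\mathbf{1}}(\ttt)/(\mathbf{1-t})^{\bn+\mathbf{1}}$ with $n_i+1=0$ carries no $(1-t_i)$ in its denominator, so its power-series expansion is already of bounded $t_i$-exponent and survives truncation intact. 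Combined with \autoref{lem_unique_values}, this forces $\dim(\BB/(\pp + \MM_i^{c+1})) = d$ for all large $c$; since $\BB/\pp$ is a domain of dimension $d$, this forces $\MM_i^{c+1} \subseteq \pp$, and primality yields $\pp \supseteq \MM_i \supseteq \nn$. The delicate step is setting up the truncation operator on $\ZZ[[\ttt]]$ rigorously so that it is additive, compatible with the structural decomposition, and preserves precisely the terms with $n_i+1=0$; once that operator is in place, the remainder is a direct dimension count.
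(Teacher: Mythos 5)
Your proposal is correct and follows essentially the same route as the paper: part (i) by specializing $t_i \mapsto t$ in the structural decomposition, the reverse implication of (ii) by observing that primality forces $\pp \supseteq \MM_i$ so the Hilbert series decomposes with $n_i = -1$ throughout, and the forward implication of (ii) by the same truncation operator $\langle\,\cdot\,\rangle_c^i$ applied to $\BB/(\pp,\MM_i^{c+1})$ followed by the dimension comparison with the domain $\BB/\pp$. The "delicate step" you flagged (showing the $\bm+\mathbf{1}$ term survives truncation while terms with $(n+1)_i>0$ drop to lower total denominator order, so Lemma~\ref{lem_unique_values} forces $\dim(\BB/(\pp,\MM_i^{c+1}))=d$) is exactly how the paper executes it.
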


\section{Relation with mixed multiplicities via Hilbert polynomials}
\label{sect3}

In this section we relate the mixed multiplicities introduced in \autoref{def_mixed_mult} with the usual mixed multiplicities defined in terms of Hilbert polynomials.
Here we also study how mixed multiplicities behave after taking quotients by filter-regular sequences.
Throughout this section we continue using all the notations and conventions of the previous section.

The following theorem shows that in a multigraded setting we can define a multigraded Hilbert polynomial, which provides the usual approach for defining mixed multiplicities.
Below in \autoref{thm_equality_mults} we obtain a different proof of this result.

\begin{theorem}[{\cite[Theorem 4.1]{HERMANN_MULTIGRAD}}]
	\label{thm_hilbert_polynomial}
	Assume \autoref{setup_general}.
	Let $\bmm$ be a finitely generated graded $\BB$-module.
	Then, there exists a polynomial $P_{\bmm}(\XX)=P_\bmm(X_1,\ldots,X_r) \in \QQ[\XX]=\QQ[X_1,\ldots,X_r]$ which can be written as 
	$$
	P_{\bmm}(\XX) = \sum_{n_1,\ldots,n_r \ge 0} e(n_1,\ldots,n_r)\binom{X_1+n_1}{n_1}\cdots \binom{X_r+n_r}{n_r},
	$$
	where $e(n_1,\ldots,n_r) \in \ZZ$, and that satisfies the following conditions:
	\begin{enumerate}[(i)]
		\item The total degree of $P_\bmm(\XX)$ is equal to $\dim\left(\Supp_{++}(\bmm)\right)$.
		\item $e(n_1,\ldots,n_r) \ge 0$ for any $n_1+\cdots+n_r=\dim\left(\Supp_{++}(\bmm)\right)$.
		\item $P_\bmm(\nu)=\text{\normalfont length}_A\left(\left[\bmm\right]_\nu\right)$ for all $\nu \gg \mathbf{0}$.
	\end{enumerate}
\end{theorem}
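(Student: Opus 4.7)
The approach is to derive $P_\bmm(\XX)$ directly from the structural decomposition of the Hilbert series given in \autoref{thm_struc_Hilb_series}, by reading off the coefficient of $\ttt^\nu$ for $\nu \gg \mathbf{0}$ in a partial-fraction-style expansion. Before doing so, I would reduce to the case $\HH_\nn^0(\bmm) = 0$: since $\HH_\nn^0(\bmm)$ is finitely generated and annihilated by some power $\nn^k$, and since in the standard multigraded setting $\nn^k \supseteq [\BB]_\mu$ for all $\mu \ge k\cdot\mathbf{1}$, one checks that $[\HH_\nn^0(\bmm)]_\nu = 0$ for $\nu \gg \mathbf{0}$. Hence $\bmm$ and $\bmm/\HH_\nn^0(\bmm)$ have the same asymptotic Hilbert function, and after replacing $\bmm$ by the latter, \autoref{lem_properties_quot_torsion} yields $d := \dim(\bmm) = r + d_{++}$ with $d_{++} := \dim(\Supp_{++}(\bmm))$.

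\textbf{Extracting the polynomial.} Write
$$
\Hilb_\bmm(\ttt) = \sum_{|\bm| = d} \frac{Q_\bm(\ttt)}{(\mathbf{1-t})^\bm}
$$
via \autoref{thm_struc_Hilb_series}, and split the sum according to whether $\bm > \mathbf{0}$. If some $m_i = 0$, the variable $t_i$ only occurs in the Laurent polynomial $Q_\bm(\ttt)$, so the coefficient of $\ttt^\nu$ in that summand vanishes once $\nu_i$ exceeds the maximal $t_i$-degree; these summands contribute nothing asymptotically. For the remaining summands, write $\bm = \bn + \mathbf{1}$ with $\bn \ge \mathbf{0}$ and $|\bn| = d - r = d_{++}$; expanding $\frac{1}{(1-t_i)^{n_i+1}} = \sum_{k \ge 0}\binom{k+n_i}{n_i}t_i^k$ and $Q_{\bn+\mathbf{1}}(\ttt) = \sum_\mu q_\mu^{(\bn)}\ttt^\mu$ gives
$$
[\ttt^\nu]\,\frac{Q_{\bn+\mathbf{1}}(\ttt)}{(\mathbf{1-t})^{\bn+\mathbf{1}}} = \sum_\mu q_\mu^{(\bn)} \prod_{i=1}^r\binom{\nu_i - \mu_i + n_i}{n_i},
$$
a polynomial in $\nu$ of total degree at most $|\bn| = d_{++}$. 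Summing over $\bn$ produces $P_\bmm(\XX) \in \QQ[\XX]$ agreeing with $\text{length}_A([\bmm]_\nu)$ for $\nu \gg \mathbf{0}$, which is property (iii). Rewriting each $\binom{X_i - \mu_i + n_i}{n_i}$ as a $\QQ$-linear combination of $\binom{X_i+k}{k}$ for $0 \le k \le n_i$, the coefficient of $\prod_i\binom{X_i+n_i}{n_i}$ with $|\bn| = d_{++}$, $\bn \ge \mathbf{0}$, receives contributions only from the corresponding $\bn$-term and equals $Q_{\bn+\mathbf{1}}(\mathbf{1}) = e_\bn(\bmm)$. Property (ii) follows immediately from the non-negativity $Q_\bm(\mathbf{1}) \ge 0$ in \autoref{thm_struc_Hilb_series}, and integrality of all $e(n_1,\ldots,n_r)$ is the standard fact that integer-valued polynomials on $\NN^r$ have integer coefficients in the binomial basis.

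\textbf{Degree equality and main obstacle.} The inequality $\deg P_\bmm \le d_{++}$ is built into the construction, so the remaining issue---and the main obstacle---is the reverse inequality, i.e., producing some $\bn \ge \mathbf{0}$ with $|\bn| = d_{++}$ and $e_\bn(\bmm) > 0$. By the associativity formula \autoref{lem_basic_props}$(iii)$ it suffices to exhibit such $\bn$ for $\BB/\pp$ for some prime $\pp \in \Supp(\bmm)$ of maximal dimension $d$; such $\pp$ is minimal in $\Supp(\bmm)$, hence associated, and by the torsion reduction $\pp \not\supseteq \nn$. Applied to $\BB/\pp$, the characterization in \autoref{thm_mult_gr_mixed}$(ii)$ now forces $e_\bm(\BB/\pp) = 0$ for every $\bm \not\ge \mathbf{0}$, while \autoref{thm_struc_Hilb_series}$(i)$ guarantees some $e_\bm(\BB/\pp) > 0$; thus that $\bm$ must satisfy $\bm \ge \mathbf{0}$ with $|\bm| = d - r = d_{++}$, and this yields (i). The delicate step throughout is precisely this upgrade from the raw existence statement in \autoref{thm_struc_Hilb_series}$(i)$ to one producing a multi-index in $\NN^r$, which is exactly where the reduction $\HH_\nn^0(\bmm) = 0$ combined with \autoref{thm_mult_gr_mixed}$(ii)$ does the work; the rest of the argument is formal power-series manipulation.
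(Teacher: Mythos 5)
Your proof is correct and follows essentially the same path as the paper's own proof of this statement (given in \autoref{thm_equality_mults}): reduce to $\HH_\nn^0(\bmm)=0$, decompose the Hilbert series via \autoref{thm_struc_Hilb_series}, and use \autoref{lem_properties_quot_torsion}(ii) together with \autoref{thm_mult_gr_mixed}(ii) to kill the contributions from $\bn \not\ge \mathbf{0}$. The only organizational differences are cosmetic: where the paper uses \autoref{thm_struc_Hilb_series}(ii) (divisibility by $1-t_i$) plus a Taylor expansion of $P_{\bn+\mathbf{1}}(\ttt)$ around $\ttt=\mathbf{1}$ to isolate the leading part, you instead observe directly that any summand with some $m_i=0$ has bounded $t_i$-degree and so drops out for $\nu \gg \mathbf{0}$, and then read the top coefficient off a binomial-basis expansion; and where the paper applies \autoref{thm_mult_gr_mixed}(ii) directly to $\obmm$, you route through the associativity formula and apply it to a single prime $\BB/\pp$ of maximal dimension. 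Both variants rest on the same key lemmas and yield the same argument.
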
 

Motivated by the previous theorem, we have the following definition which goes back to the work of van der Waerden (\cite{VAN_DER_WAERDEN}).

\begin{definition}
	\label{def_mixed_mult_poly}
	Assume \autoref{setup_general}. 
	Let $\bmm$ be a finitely generated graded $\BB$-module. 
	Let $d_{++}=\dim\left(\Supp_{++}(\bmm)\right)$. 
	Let $P_\bmm(\XX)=P_\bmm(X_1,\ldots,X_r)$ be the polynomial
	$$
	P_{\bmm}(\XX) = \sum_{n_1,\ldots,n_r \ge 0} e(n_1,\ldots,n_r)\binom{X_1+n_1}{n_1}\cdots \binom{X_r+n_r}{n_r}
	$$
	obtained according to \autoref{thm_hilbert_polynomial}.
	For any $\bn=(n_1,\ldots,n_r) \in \NN^r$ with $\lvert \bn\rvert \ge d_{++}$, the \textit{mixed multiplicity of $\bmm$ of type $\bn$ defined in terms of Hilbert polynomials} is given by 
	$$
	e\left(\bn;\bmm\right) := \begin{cases}
	e(n_1,\ldots,n_r) \quad\text{ if } \lvert \bn\rvert = d_{++}\\
	0 \quad\;\;\,\qquad\qquad\; \text{ if } \lvert \bn\rvert > d_{++}.
	\end{cases} 
	$$	
\end{definition}

The next simple lemma will be needed to relate \autoref{def_mixed_mult} and  \autoref{def_mixed_mult_poly}.  

\begin{lemma}
	\label{lem_properties_quot_torsion}
	Let $\bmm$ be a finitely generated graded $\BB$-module. Then, the following statements hold:
	\begin{enumerate}[(i)]
		\item ${\left[\bmm\right]}_\nu = {\left[\bmm / \HH^0_{\nn}(\bmm)\right]}_\nu$ for all $\nu \gg \mathbf{0}$.
		\item $\Ass\left(\bmm / \HH^0_{\nn}(\bmm)\right)=\Ass(\bmm) \setminus V(\nn)$.
		\item $\dim\left(\Supp_{++}(\bmm)\right) = \dim\left(\bmm / \HH^0_{\nn}(\bmm)\right) - r$.
	\end{enumerate}
	\begin{proof}
		$(i)$ Follows from the fact that $\nn^k \cdot \HH_\nn^0(\bmm)=0$ for some $k \ge 0$.
		
		$(ii)$ It is well-known (see, e.g., \cite[Proposition 3.13]{EISEN_COMM}).
		
		$(iii)$ By using part $(ii)$ and $\Supp_{++}\left(\bmm\right)=\Supp_{++}\left(\bmm/\HH_\nn^0(\bmm)\right)$,  we get
		$$
		\dim\left(\Supp_{++}\left(\bmm\right)\right) = \sup\big\lbrace \dim\big(\multProj\left(\BB/\pp\right)\big) \mid \pp \in \Supp_{++}(\bmm) \big\rbrace
		$$
		and 
		$$
		\dim\left(\bmm/\HH_\nn^0(\bmm)\right) = \sup\big\lbrace \dim\left(\BB/\pp\right) \mid \pp \in \Supp_{++}(\bmm) \big\rbrace.
		$$
		Finally, for any $\pp \not\supseteq \nn$ it is known that $\dim\left(\multProj(\BB/\pp)\right)=\dim(\BB/\pp)-r$ (see, e.g., \cite[Lemma 1.1]{HERMANN_MULTIGRAD}, \cite[Lemma 1.2]{HYRY_MULTIGRAD}).
		If $\Supp_{++}(\bmm)=\emptyset$, then by convention we have $\dim\left(\Supp_{++}(\bmm)\right)=-\infty$ and $\dim\left(\bmm/\HH_{\nn}^0(\bmm)\right)=-\infty$.
	\end{proof}
\end{lemma}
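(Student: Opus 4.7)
The plan is to prove the three parts in order, with (ii) feeding into (iii); part (i) is essentially independent.

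For (i), the key input is that $\HH^0_\nn(\bmm)$ is a finitely generated graded submodule of $\bmm$ by Noetherianity, so each of its homogeneous generators is killed by some power of $\nn$, and taking a common power yields $\nn^k \cdot \HH^0_\nn(\bmm) = 0$. Picking homogeneous generators $y_1,\ldots,y_s$ of degrees $\eta_1,\ldots,\eta_s$, any element of $[\HH^0_\nn(\bmm)]_\nu$ is a sum $\sum b_i y_i$ with $b_i \in [\BB]_{\nu-\eta_i}$. Once $\nu-\eta_i \ge k \cdot \mathbf{1}$ for every $i$ (which holds for $\nu \gg \mathbf{0}$), each $b_i$ lies in $\nn^k$ and therefore annihilates $y_i$. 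Hence $[\HH^0_\nn(\bmm)]_\nu = 0$ for $\nu \gg \mathbf{0}$, and the quotient map is an isomorphism on those components.

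Part (ii) is the standard description of associated primes in the local cohomology short exact sequence. Applying $0 \to \HH^0_\nn(\bmm) \to \bmm \to \bmm/\HH^0_\nn(\bmm) \to 0$, one notes that $\Ass(\HH^0_\nn(\bmm)) \subseteq V(\nn)$ because $\nn^k$ annihilates it, while any associated prime of the quotient lies in $\Ass(\bmm)$ and cannot contain $\nn$ (otherwise the corresponding embedded cyclic submodule would be $\nn$-torsion and hence lift into $\HH^0_\nn(\bmm)$). Conversely, each $\pp \in \Ass(\bmm) \setminus V(\nn)$ survives in the quotient because localising at such $\pp$ annihilates $\HH^0_\nn(\bmm)$.

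For (iii), the strategy is to rewrite both sides as suprema indexed by the same set of primes, and then invoke the known MultiProj dimension identity. Concretely, $\Supp_{++}(\bmm) = \Supp_{++}(\bmm/\HH^0_\nn(\bmm))$ by (i) (or directly because the kernel is supported on $V(\nn)$), and by (ii) every associated (hence every minimal) prime of $\bmm/\HH^0_\nn(\bmm)$ lies outside $V(\nn)$; so $\dim(\bmm/\HH^0_\nn(\bmm)) = \sup\{\dim(\BB/\pp) \mid \pp \in \Supp_{++}(\bmm)\}$, while $\dim(\Supp_{++}(\bmm)) = \sup\{\dim(\multProj(\BB/\pp)) \mid \pp \in \Supp_{++}(\bmm)\}$ by definition. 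The proof concludes by plugging in the identity $\dim(\multProj(\BB/\pp)) = \dim(\BB/\pp) - r$ for $\pp \not\supseteq \nn$, which is the multigraded analogue of the classical $\Proj$ formula and may be cited. No step is genuinely hard here; the only bookkeeping I expect to be delicate is handling the empty-support case via the convention $-\infty - r = -\infty$ and keeping straight which suprema range over $\Supp_{++}$ versus over associated primes.
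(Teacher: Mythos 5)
Your proposal is correct and follows essentially the same route as the paper's proof: part (i) from $\nn^k\cdot \HH^0_\nn(\bmm)=0$ together with standard multigradedness, part (ii) as the standard fact about $\Gamma_\nn$-torsion (which the paper simply cites to Eisenbud), and part (iii) by rewriting both sides as suprema over $\Supp_{++}(\bmm)$ and invoking the identity $\dim(\multProj(\BB/\pp))=\dim(\BB/\pp)-r$. You merely spell out a bit more detail than the paper does in (i) and (ii).
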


The following theorem shows that the mixed multiplicities in terms of Hilbert polynomials can be expressed as the ones in terms of Hilbert series after modding out the torsion with respect to $\nn$.
Additionally, we provide a different proof of \autoref{thm_hilbert_polynomial}.

\begin{theorem}
	\label{thm_equality_mults}
	Assume \autoref{setup_general}. 
	Let $\bmm$ be a finitely generated graded $\BB$-module. 
	Let $d_{++}=\dim\left(\Supp_{++}(\bmm)\right)$. 
	Then, the following statements hold:
	\begin{enumerate}[(i)]
		\item For $\nu \gg \mathbf{0}$, the function $\text{\normalfont length}_A\left(\left[\bmm\right]_\nu\right)$ becomes a polynomial in $\QQ[\XX]$ of total degree $d_{++}$ which can be written in the form
		$$
		\sum_{\lvert \bn \rvert = d_{++}} \frac{e_\bn\Big(\bmm/\HH_\nn^0(\bmm)\Big)}{\bn!}\XX^{\bn}\, +\, \left({\normalfont} \text{terms of total degree} < d_{++} \right).
		$$
		\item For each $\bn \in \NN^r$ with $\lvert \bn \rvert \ge d_{++}$ we have the equality 
		$$
		e(\bn;\bmm) = e_\bn\Big(\bmm/\HH_\nn^0(\bmm)\Big).
		$$		
	\end{enumerate}
	\begin{proof}
		$(i)$
			For simplicity of notation set $\obmm=\bmm/\HH_\nn^0(\bmm)$.
			Note that \autoref{lem_properties_quot_torsion}$(iii)$ yields $d_{++}=\dim(\obmm)-r$.
			From \autoref{thm_struc_Hilb_series} take a decomposition
			$$
			\Hilb_\obmm(\ttt)=\sum_{\substack{\lvert\bn\rvert = d_{++}\\\bn \ge \mathbf{-1}}} \frac{Q_{\bn+\mathbf{1}}(\ttt)}{(\mathbf{1-t})^{\bn+\mathbf{1}}}.
			$$
			None of the hypotheses or conclusions change if we consider instead the shifted module $\bmm(-\nu)$ for any $\nu \ge \mathbf{0}$; therefore, without any loss of generality we may assume that $Q_{\bn+\mathbf{1}}(\ttt) \in \ZZ[\ttt]$ are polynomials in $\ttt$.
			Let 
			$$
			F(\ttt)=\Hilb_\obmm(\ttt)-\sum_{\substack{\lvert\bn\rvert = d_{++}\\\bn \ge \mathbf{0}}} \frac{Q_{\bn+\mathbf{1}}(\mathbf{1})}{(\mathbf{1-t})^{\bn+\mathbf{1}}} =\sum_{\substack{\lvert\bn\rvert = d_{++}\\\bn \ge \mathbf{-1}}} \frac{P_{\bn+\mathbf{1}}(\ttt)}{(\mathbf{1-t})^{\bn+\mathbf{1}}}
			$$ 
			where 
			$$
			P_{\bn+\mathbf{1}}(\ttt) = \begin{cases}
				Q_{\bn+\mathbf{1}}(\ttt)-Q_{\bn+\mathbf{1}}(\mathbf{1})  \quad \text{ if } \bn \ge \mathbf{0}\\
				Q_{\bn+\mathbf{1}}(\ttt) \qquad\qquad\quad\quad\text{ if } \bn \not\ge \mathbf{0}.
			\end{cases}
			$$
			
			From \autoref{lem_properties_quot_torsion}$(ii)$ and \autoref{thm_mult_gr_mixed}$(ii)$ we obtain that $e_\bn(\obmm) > 0$ only when $\bn \ge \mathbf{0}$.
			Then, \autoref{thm_struc_Hilb_series}$(ii)$ implies that, for any $\bn \not\ge \mathbf{0}$,  $P_{\bn+\mathbf{1}}(\ttt)=Q_{\bn+\mathbf{1}}(\ttt)$ and $(\mathbf{1-t})^{\bn+\mathbf{1}}$ are both divisible by some $(1-t_i)$.
			For $\bn \ge \mathbf{0}$, since $P_{\bn+\mathbf{1}}(\mathbf{1})=0$, we can write $P_{\bn+\mathbf{1}}(\ttt)=\sum_{\lvert \alpha \rvert > 0}p_{\bn+\mathbf{1}}^\alpha (\mathbf{1-t})^\alpha$ where $p_{\bn+\mathbf{1}}^\alpha \in \ZZ$.
			By summing up, we conclude that $F(\ttt)$ can be written in the following form 
			$$
			F(\ttt) = \sum_{\substack{\lvert\bn\rvert < d_{++}\\\bn \ge \mathbf{-1}}} \frac{G_{\bn+\mathbf{1}}(\ttt)}{(\mathbf{1-t})^{\bn+\mathbf{1}}}
			$$
			where $G_{\bn+\mathbf{1}}(\ttt) \in \ZZ[\ttt]$, i.e., $F(\ttt)$ can be written as the sum of rational functions with quotients of total order strictly less than $r+d_{++}$.
			
			Expanding these series we get that 
			$$
			\Hilb_{\obmm}(\ttt) = \sum_{\nu}\left(\sum_{\substack{\lvert\bn\rvert = d_{++}\\\bn \ge \mathbf{0}}} Q_{\bn+\mathbf{1}}(\mathbf{1})\binom{\nu+\bn}{\bn}
			\right)\ttt^{\nu}
			+ \sum_{\nu}\left(\sum_{\substack{\lvert\bn\rvert < d_{++}\\\bn \ge \mathbf{-1}}} G_{\bn+\mathbf{1}}(\ttt)\binom{\nu+\bn}{\bn}
			\right)\ttt^\nu 
			$$
			where we use the notation
			$$
			\binom{\nu+\bn}{\bn}=\binom{\nu_1+n_1}{n_1}\cdots\binom{\nu_r+n_r}{n_r}.
			$$
			By using \autoref{lem_properties_quot_torsion}$(i)$  and expanding the polynomials $G_{\bn+\mathbf{1}}(\ttt)$, we obtain that for $\nu\gg \mathbf{0}$ the function $\text{length}_A(\left[\bmm\right]_\nu)$ becomes a polynomial in $\QQ[\XX]$ of the form
			$$
			\sum_{\lvert \bn \rvert = d_{++}} \frac{Q_{\bn+\mathbf{1}}(\mathbf{1})}{\bn!}\XX^{\bn}\, +\, \left({\normalfont} \text{terms of total degree} < d_{++} \right).
			$$
			Therefore, we are done since $e_\bn(\obmm)=Q_{\bn+\mathbf{1}}(\mathbf{1})$. 
		
			$(ii)$ It follows from part $(i)$.
	\end{proof}
\end{theorem}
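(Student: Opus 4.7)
The plan is to establish part $(i)$ first, then read off part $(ii)$ by matching leading coefficients. Set $\obmm := \bmm/\HH_\nn^0(\bmm)$. By \autoref{lem_properties_quot_torsion}$(i)$, $[\bmm]_\nu = [\obmm]_\nu$ for $\nu \gg \mathbf{0}$, so it suffices to analyze $\Hilb_\obmm(\ttt)$. By \autoref{lem_properties_quot_torsion}$(iii)$, $\dim(\obmm) = r + d_{++}$, so \autoref{thm_struc_Hilb_series} applied to $\obmm$ gives a decomposition
$$
\Hilb_\obmm(\ttt) = \sum_{\substack{\bn \ge \mathbf{-1}\\ \lvert \bn + \mathbf{1} \rvert = r + d_{++}}} \frac{Q_{\bn+\mathbf{1}}(\ttt)}{(\mathbf{1-t})^{\bn+\mathbf{1}}}.
$$
After replacing $\bmm$ by a shift (which changes nothing in the statements), I may assume each $Q_{\bn+\mathbf{1}}(\ttt) \in \ZZ[\ttt]$ is an honest polynomial.

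The key step is to show that the terms with $\bn \not\ge \mathbf{0}$ contribute only lower-order pieces. Since $\nn \notin \Ass(\obmm)$ by \autoref{lem_properties_quot_torsion}$(ii)$, combining \autoref{thm_mult_gr_mixed}$(ii)$ (applied to $\obmm$, via its associated prime cycle) with \autoref{lem_basic_props}$(iii)$ forces $e_\bm(\obmm) = 0$, i.e., $Q_{\bm+\mathbf{1}}(\mathbf{1}) = 0$, for every $\bm \not\ge \mathbf{0}$ with $\lvert \bm+\mathbf{1}\rvert = r + d_{++}$. By \autoref{thm_struc_Hilb_series}$(ii)$, each such $Q_{\bm+\mathbf{1}}(\ttt)$ is then divisible by $(1-t_i)$ for some $i$ with $m_i+1 \neq 0$, so that summand reduces to a rational function whose denominator has total degree strictly less than $r + d_{++}$. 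For the remaining terms, with $\bn \ge \mathbf{0}$, I write $Q_{\bn+\mathbf{1}}(\ttt) = Q_{\bn+\mathbf{1}}(\mathbf{1}) + R_{\bn+\mathbf{1}}(\ttt)$, where $R_{\bn+\mathbf{1}}(\mathbf{1}) = 0$, so $R_{\bn+\mathbf{1}}$ is a $\ZZ$-linear combination of $(\mathbf{1-t})^\alpha$ with $\lvert \alpha \rvert \ge 1$ and its contribution likewise reduces to denominators of smaller total degree.

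Collecting everything, $\Hilb_\obmm(\ttt)$ equals
$$
\sum_{\substack{\bn \ge \mathbf{0}\\ \lvert\bn\rvert = d_{++}}} \frac{e_\bn(\obmm)}{(\mathbf{1-t})^{\bn+\mathbf{1}}} + F(\ttt),
$$
where $F(\ttt)$ is a sum of rational functions $G(\ttt)/(\mathbf{1-t})^{\bm+\mathbf{1}}$ with $G \in \ZZ[\ttt]$ and $\lvert \bm+\mathbf{1}\rvert < r + d_{++}$. Expanding $(1-t_i)^{-(n_i+1)} = \sum_{\nu_i \ge 0} \binom{\nu_i+n_i}{n_i}t_i^{\nu_i}$ and using that $\binom{\nu_i+n_i}{n_i}$ has leading term $\nu_i^{n_i}/n_i!$, matching coefficients shows that for $\nu \gg \mathbf{0}$ the length function $\text{length}_A([\obmm]_\nu)$ is a polynomial in $\QQ[\XX]$ of total degree $d_{++}$ with leading part exactly $\sum_{\lvert\bn\rvert = d_{++}} \frac{e_\bn(\obmm)}{\bn!}\XX^\bn$. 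The contribution of $F(\ttt)$ produces only terms of degree $< d_{++}$. This proves part $(i)$.

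Part $(ii)$ is then a comparison of coefficients: by \autoref{def_mixed_mult_poly}, $e(\bn;\bmm)$ is the coefficient of $\binom{X_1+n_1}{n_1}\cdots\binom{X_r+n_r}{n_r}$ in the Hilbert polynomial, whose leading monomial is $\XX^\bn/\bn!$; comparing with the leading expression of part $(i)$ yields $e(\bn;\bmm) = e_\bn(\obmm)$. The main obstacle is the treatment of the "negative-type" summands $\bn \not\ge \mathbf{0}$: the argument hinges on first knowing that $\obmm$ has no max-dimensional associated prime containing $\nn$, which is why passing to $\bmm/\HH_\nn^0(\bmm)$ is essential, and then carefully invoking \autoref{thm_struc_Hilb_series}$(ii)$ to strip one factor $(1-t_i)$ from the numerator so these summands drop in order and do not pollute the leading coefficients.
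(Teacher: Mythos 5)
Your proposal is correct and follows essentially the same route as the paper's own proof: pass to $\obmm=\bmm/\HH_\nn^0(\bmm)$, take the Hilbert-series decomposition from \autoref{thm_struc_Hilb_series}, use \autoref{lem_properties_quot_torsion}$(ii)$ together with \autoref{thm_mult_gr_mixed}$(ii)$ to kill the negative-type terms, strip a factor of $(1-t_i)$ via \autoref{thm_struc_Hilb_series}$(ii)$, and expand the surviving leading terms into the Hilbert polynomial. The only cosmetic difference is your explicit invocation of \autoref{lem_basic_props}$(iii)$, which the paper leaves implicit.
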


To further study mixed multiplicities we use the concept of \textit{filter-regular sequence}, also known as \textit{almost regular sequence} (see, e.g., \cite{TRUNG_POSITIVE,TRUNG_REDUCTIONEXPONENT}, \cite[\S 4.3.1]{HERZOG_HIBI_MONOMIALS}).

\begin{definition}
	\label{def_filter_reg_elems}
	Let $1 \le i \le r$ and $\bmm$ be a finitely generated graded $\BB$-module.
	A homogeneous element is said to be \textit{filter-regular on $\bmm$} if $z \not\in \pp$ for all associated primes $\pp \in \Ass(\bmm)$ of $\bmm$ such that $\pp \not\supseteq \nn$.
	A sequence of homogeneous elements $z_1,\ldots,z_m \in \BB$ is said to be \textit{filter-regular on $\bmm$} if $z_j$ is a filter-regular element on $\bmm/\left(z_1,\ldots,z_{j-1}\right)\bmm$ for all $1 \le j \le m$.
\end{definition}

\begin{lemma}
	\label{lem_filter_regular}
	Let $1 \le i \le r$,  $z \in \left[\BB\right]_{\ee_i}$ and $\bmm$ be a finitely generated graded $\BB$-module.
	Then, the following statements are equivalent:
	\begin{enumerate}[(i)]
		\item $\big[\left(0:_{\bmm} z \right)\big]_\nu=0$ for $\nu \in \ZZ^r$ with $\nu \gg \mathbf{0}$.
		\item $\nn^k \cdot  \left(0:_{\bmm} z \right) = 0$ for some $k > 0$.
		\item $\Supp\left(\left(0:_{\bmm} z \right)\right) \subseteq V(\nn)$.
		\item $z$ is filter-regular on $\bmm$.		
	\end{enumerate}	
	\begin{proof}
		The equivalence $(i) \Leftrightarrow (ii)$ is clear.
		For the equivalence $(ii) \Leftrightarrow (iii)$, note that $\Supp(\left(0:_{\bmm} z \right))=V(\Ann(\left(0:_{\bmm} z \right))) \subseteq V(\nn)$ is equivalent to $\nn \subseteq \sqrt{\Ann\left(\left(0:_{\bmm} z \right)\right)}$.
		
		$(iii) \Rightarrow (iv)$ 
		Suppose that $\Supp\left(\left(0:_{\bmm} z \right)\right) \subseteq V(\nn)$ and let $\pp \in \Ass(\bmm)$ such that $\pp \not\supseteq \nn$.
		Then, there is an injection $R/\pp \hookrightarrow \bmm$, and if $z \in \pp$ then we would get the contradiction $0 \neq R_\pp/\pp R_\pp = \left(0:_{R_\pp/\pp R_\pp} zR_\pp\right) \hookrightarrow  \left(0:_{\bmm_\pp} zR_\pp\right) = \left(0:_\bmm z\right)_\pp$.
		Therefore $z \not\in \pp$. 
		
		$(iv) \Rightarrow (iii)$
		Suppose $z$ is filter-regular on $\bmm$.
		Since the minimal primes of $\Ass\left(\left(0:_\bmm z\right)\right)$ and $\Supp\left(\left(0:_\bmm z\right)\right)$ coincide, it is enough to show that $\Ass\left(\left(0:_\bmm z\right)\right) \subseteq V(\nn)$.
		Take $\pp \in \Ass(\bmm)$ such that $\pp \not\supseteq \nn$, then we obtain $\left(0:_\bmm z\right)_\pp = \left(0:_{\bmm_\pp} zR_\pp\right)=\left(0:_{\bmm_\pp} R_\pp\right)=0$ because $z \not\in \pp$.
		Therefore $\Ass\left(\left(0:_\bmm z\right)\right) \subseteq V(\nn)$.
	\end{proof}
\end{lemma}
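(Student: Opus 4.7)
The plan is to prove the lemma via $(i) \Leftrightarrow (ii) \Leftrightarrow (iii)$ using standard finite-generation arguments, and then to handle the substantive equivalence $(iii) \Leftrightarrow (iv)$ via associated primes. Throughout I set $N := (0:_\bmm z)$, which is a finitely generated graded $\BB$-submodule of $\bmm$.

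For the block among (i), (ii), (iii), I would first record two structural facts. Since $\BB$ is standard multigraded, the irrelevant ideal $\nn$ is generated by products $x_1 \cdots x_r$ with $x_j \in [\BB]_{\ee_j}$, and consequently every homogeneous element of multidegree $\ge k\mathbf{1}$ lies in $\nn^k$; and since $N$ is finitely generated, $\Supp(N) = V(\Ann(N))$. Given these, $(ii) \Rightarrow (i)$ follows by writing elements of $[N]_\nu$ for $\nu \gg \mathbf{0}$ as $\BB$-combinations of a fixed generating set, with coefficients forced to lie in $\nn^k$; conversely $(i) \Rightarrow (ii)$ follows by pushing each fixed generator into the vanishing range via a sufficiently high power of $\nn$. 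The equivalence $(ii) \Leftrightarrow (iii)$ is the usual translation: $\nn^k N = 0$ means $\nn^k \subseteq \Ann(N)$, and for finitely generated $N$ this is equivalent to $\sqrt{\Ann(N)} \supseteq \nn$, i.e. $\Supp(N) \subseteq V(\nn)$, with the converse using that $\nn$ is itself finitely generated.

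For $(iv) \Rightarrow (iii)$, I will use that $N \hookrightarrow \bmm$ gives $\Ass(N) \subseteq \Ass(\bmm)$. If some $\pp \in \Ass(N)$ were not in $V(\nn)$, filter-regularity of $z$ on $\bmm$ would force $z \notin \pp$, so $z$ would act as a unit on $\bmm_\pp$, giving $N_\pp = (0:_{\bmm_\pp} z) = 0$, contrary to $\pp \in \Ass(N)$. Hence $\Ass(N) \subseteq V(\nn)$, and therefore $\Supp(N) \subseteq V(\nn)$. For the reverse $(iii) \Rightarrow (iv)$, I would take $\pp \in \Ass(\bmm)$ with $\pp \not\supseteq \nn$ and pick $m \in \bmm$ with $\Ann(m) = \pp$; if $z \in \pp$ then $zm = 0$, so $m \in N$ with annihilator still $\pp$, producing $\pp \in \Ass(N) \subseteq \Supp(N) \subseteq V(\nn)$, a contradiction.

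I expect no real obstacle: the argument is elementary throughout, and the only nontrivial ingredient is the identification of sufficiently high multidegree components of $\BB$ with elements of powers of $\nn$, which is a direct consequence of $\BB$ being standard multigraded. The conceptual content of the lemma is essentially that filter-regularity is the right functorial shadow, on $\Ass(\bmm)$, of the geometric condition that $(0:_\bmm z)$ be supported inside $V(\nn)$.
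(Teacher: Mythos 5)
Your proposal is correct and follows essentially the same route as the paper: the block $(i)\Leftrightarrow(ii)\Leftrightarrow(iii)$ by finite generation and the annihilator/radical translation, and $(iii)\Leftrightarrow(iv)$ via associated primes and the fact that $z\notin\pp$ forces $\left(0:_{\bmm}z\right)_\pp=0$. The only differences are cosmetic: you spell out the $(i)\Leftrightarrow(ii)$ step the paper calls ``clear,'' and for $(iii)\Rightarrow(iv)$ you argue with an element $m$ satisfying $\Ann(m)=\pp$ instead of localizing the injection $\BB/\pp\hookrightarrow\bmm$, which is an equivalent argument.
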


The following lemma shows that under the assumption that the residue field $\kk=A/\nnn$ is infinite we can always find filter-regular elements.  

\begin{lemma}
	\label{lem_general_filter_reg_seq}
	Let $1 \le i \le r$, $\bmm$ be a finitely generated graded $\BB$-module and suppose that the residue field $\kk=A/\nnn$ of $A$ is infinite.
	Let $V$ be the finite dimensional $\kk$-vector space $V={\left[\BB\right]}_{\ee_i} \otimes_A \kk$, choose a basis for $V$ and consider the Zariski topology on $V\cong \kk^{\dim_\kk(V)}$.
	Denote by $\pi$ the canonical map $\pi: {\left[\BB\right]}_{\ee_i} \rightarrow V={\left[\BB\right]}_{\ee_i} \otimes_A \kk$.
	Then 
	$$
	U = \lbrace w \in V \mid w = \pi(z) \text{ for some } z\in \left[\BB\right]_{\ee_i} \text{ filter-regular on } \bmm \rbrace 
	$$
	is a dense open subset of $V$.
	\begin{proof}
		Let $\Ass(\bmm) \setminus V(\nn) = \{\pp_1,\ldots,\pp_m\}$.
		Since $\pp_j \not\supseteq \MM_i$, then Nakayama's lemma implies that $V_j = \pi\left(\pp_j \cap {\left[\BB\right]}_{\ee_i}\right)$ is a proper linear subspace of $V$.
		Since $\nnn$ is nilpotent, $\pp_j \supset \nnn$ and in particular $\pp_j \supset \nnn{\left[\BB\right]}_{\ee_i}$. 
		Then, for any $z \in \left[\BB\right]_{\ee_i}$ we have that $z$ is filter-regular on $\bmm$ if and only if $\pi(z) \not\in V_1 \cup \cdots \cup V_m$.
		Therefore, using that $\kk$ is an infinite field, the result follows  and we obtain $U = V \setminus \left(V_1 \cup \cdots \cup V_m\right)$.
	\end{proof}
\end{lemma}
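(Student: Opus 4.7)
The plan is to translate the condition of being filter-regular into an avoidance condition on a finite collection of associated primes, and then to use Nakayama's lemma together with the hypothesis that $\kk$ is infinite to show that the preimages of those primes cut out proper linear subspaces of $V$ whose union cannot exhaust $V$.

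First, I would invoke \autoref{lem_filter_regular}, so that $z \in [\BB]_{\ee_i}$ is filter-regular on $\bmm$ precisely when $z \notin \pp$ for every $\pp \in \Ass(\bmm) \setminus V(\nn)$. Since $\bmm$ is finitely generated, this set of primes is finite; list them as $\pp_1,\ldots,\pp_m$. For each $j$, since $\pp_j \not\supseteq \nn = \MM_1 \cap \cdots \cap \MM_r$ and $\pp_j$ is prime, we get $\pp_j \not\supseteq \MM_i$; and because $\BB$ is standard multigraded over $A$, the ideal $\MM_i$ is generated by $[\BB]_{\ee_i}$, hence $[\BB]_{\ee_i} \not\subseteq \pp_j$. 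Thus $W_j := \pp_j \cap [\BB]_{\ee_i}$ is a proper $A$-submodule of the finitely generated $A$-module $[\BB]_{\ee_i}$.

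Next, let $V_j := \pi(W_j) \subseteq V$. If $V_j = V$, then $W_j + \nnn [\BB]_{\ee_i} = [\BB]_{\ee_i}$, and Nakayama's lemma (applied over the local ring $A$) forces $W_j = [\BB]_{\ee_i}$, contradicting the previous step. So each $V_j$ is a proper $\kk$-linear subspace of $V$. The key compatibility to verify is that, for $z \in [\BB]_{\ee_i}$, membership $z \in \pp_j$ is equivalent to $\pi(z) \in V_j$: the implication $\Rightarrow$ is immediate, and for $\Leftarrow$, write $z = w + n$ with $w \in W_j$ and $n \in \nnn [\BB]_{\ee_i}$; since $A$ is Artinian local, $\nnn$ is nilpotent, hence $\nnn \subseteq \pp_j$ for every prime $\pp_j$, so $n \in \pp_j$ and thus $z \in \pp_j$. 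Therefore
\[
U \;=\; V \setminus \left( V_1 \cup \cdots \cup V_m \right),
\]
which is visibly open in the Zariski topology on $V \cong \kk^{\dim_\kk(V)}$.

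Finally, to deduce density I would use that $\kk$ is infinite: a finite union of proper $\kk$-linear subspaces of a finite-dimensional $\kk$-vector space cannot equal the whole space, so $U$ is nonempty, and being the complement of a closed set (a finite union of linear subspaces) that does not contain the generic point, $U$ is a dense open subset of $V$. The main (and really only) obstacle is the routine but crucial identification $z \in \pp_j \Longleftrightarrow \pi(z) \in V_j$, which relies in an essential way on the nilpotency of $\nnn$; once this is in place, Nakayama and the infiniteness of $\kk$ close the argument immediately.
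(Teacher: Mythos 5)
Your proposal is correct and follows essentially the same route as the paper: identify the finitely many primes in $\Ass(\bmm)\setminus V(\nn)$, use Nakayama's lemma to see that each $\pi(\pp_j\cap[\BB]_{\ee_i})$ is a proper linear subspace, use the nilpotency of $\nnn$ to get the equivalence $z\in\pp_j \Leftrightarrow \pi(z)\in V_j$, and conclude via the infiniteness of $\kk$. The only cosmetic difference is that the avoidance characterization of filter-regularity you attribute to \autoref{lem_filter_regular} is really just \autoref{def_filter_reg_elems} itself; otherwise your write-up simply fills in the details the paper leaves implicit.
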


\begin{remark}
	As customary, the assumption on the infiniteness of $\kk=A/\nnn$ can be achieved by making the faithfully flat base change $\BB \otimes_A B$ where $B = {A[x]}_{\nnn A[x]}$ and $x$ is an indeterminate.
	Note that the residue field of $B$ is $\kk(x)$ and that $\text{length}_A\left(\left[\bmm\right]_\nu\right) =\text{length}_{B} \left(\left[\bmm \otimes_A B\right]_\nu\right)$ for all $\nu \in \ZZ^r$ and $\bmm$ a finitely generated graded $\BB$-module.
\end{remark}

The next proposition shows that mixed multiplicities behave nicely when taking quotient by a filter-regular element. 

\begin{lemma}
	\label{lem_reduce_filter_reg}
	Let $1\le i \le r$, $\bmm$ be a finitely generated graded $\BB$-module with $d_{++}=\dim\left(\Supp_{++}(\bmm)\right)$ and $z \in \left[\BB\right]_{\ee_i}$ be a filter-regular element on $\bmm$.
	Then, we obtain that $\dim\left(\Supp_{++}(\bmm/z \bmm)\right) \le d_{++} - 1$ and 
	$$
	e(\bn;\bmm) = e\left(\bn-\ee_i;\bmm/z\bmm\right)
	$$
	for all $\ee_i \le \bn \in \NN^r$ such that $\lvert \bn \rvert \ge d_{++}$.
	\begin{proof}
		The four-term exact sequence 
		$$
		0 \rightarrow \left(0:_{\bmm} z\right)(-\ee_i) \rightarrow \bmm(-\ee_i) \xrightarrow{z} \bmm \rightarrow \bmm/z\bmm \rightarrow 0,
		$$
		\autoref{thm_hilbert_polynomial} (or \autoref{thm_equality_mults}) and \autoref{lem_filter_regular} give the equation 
		$$
		P_{\bmm/z\bmm}(\XX) = P_{\bmm}(\XX) - P_{\bmm}(\XX-\ee_i).
		$$
		It is easy to check that $P_{\bmm}(\XX) - P_{\bmm}(\XX-\ee_i)$ is a polynomial of total degree smaller or equal than $d_{++}-1$ and that its coefficient of degree $\bn-\ee_i$ is equal to $\frac{e(\bn;\bmm)}{(\bn-\ee_i)!}$.
		So, the result follows from \autoref{thm_hilbert_polynomial} (or \autoref{thm_equality_mults}).
	\end{proof}
\end{lemma}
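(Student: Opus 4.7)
The plan is to extract everything from the four-term exact sequence
$$
0 \rightarrow \left(0:_{\bmm} z\right)(-\ee_i) \rightarrow \bmm(-\ee_i) \xrightarrow{z} \bmm \rightarrow \bmm/z\bmm \rightarrow 0
$$
together with the filter-regularity hypothesis. First, I would invoke \autoref{lem_filter_regular} to conclude that $(0:_\bmm z)$ is annihilated by some power of $\nn$; consequently $\bigl[(0:_\bmm z)\bigr]_\nu = 0$ for $\nu \gg \mathbf{0}$. Combined with \autoref{thm_equality_mults}(i) (or equivalently \autoref{thm_hilbert_polynomial} applied to the leftmost term, whose Hilbert polynomial is identically zero), taking lengths in each multidegree $\nu \gg \mathbf{0}$ in the four-term sequence produces the polynomial identity
$$
P_{\bmm/z\bmm}(\XX) \;=\; P_{\bmm}(\XX) \;-\; P_{\bmm}(\XX-\ee_i).
$$

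Next I would analyze the right-hand side coefficient-wise. Write
$$
P_{\bmm}(\XX) \;=\; \sum_{\lvert\bn\rvert = d_{++}} \frac{e(\bn;\bmm)}{\bn!}\, \XX^{\bn} \;+\; (\text{terms of total degree} < d_{++}).
$$
The difference operator $P(\XX) \mapsto P(\XX) - P(\XX-\ee_i)$ is a first-order partial difference in the $X_i$-variable, so it lowers the total degree by at least one. In particular $\deg\bigl(P_{\bmm/z\bmm}(\XX)\bigr) \le d_{++} - 1$, which by \autoref{thm_equality_mults}(i) gives $\dim\bigl(\Supp_{++}(\bmm/z\bmm)\bigr) \le d_{++} - 1$. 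Moreover, for each $\bn$ with $\lvert\bn\rvert = d_{++}$ and $n_i \ge 1$, the contribution of $\frac{e(\bn;\bmm)}{\bn!}\XX^{\bn}$ to $P_{\bmm}(\XX) - P_{\bmm}(\XX - \ee_i)$ in top degree is precisely $\frac{e(\bn;\bmm)}{(\bn-\ee_i)!}\XX^{\bn-\ee_i}$ (monomials with $n_i = 0$ disappear, and lower-order monomials in $P_\bmm$ contribute only to degree $<d_{++}-1$). Reading off this coefficient and comparing with the formula in \autoref{def_mixed_mult_poly} applied to $\bmm/z\bmm$ yields $e(\bn - \ee_i;\bmm/z\bmm) = e(\bn;\bmm)$, which handles the case $\lvert\bn\rvert = d_{++}$.

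For the extended range $\lvert\bn\rvert > d_{++}$ both sides are zero by definition (once one checks that $\dim\bigl(\Supp_{++}(\bmm/z\bmm)\bigr)\le d_{++}-1 \le \lvert\bn-\ee_i\rvert$), so the equality holds trivially. The main subtlety is the coefficient bookkeeping in the second paragraph: one must be careful that no monomial of $P_\bmm(\XX)$ of degree exactly $d_{++}$ contributes to the top degree of the difference unless its $\ee_i$-component is strictly positive, and that strictly lower-degree terms of $P_\bmm(\XX)$ do not accidentally contaminate the coefficient of $\XX^{\bn-\ee_i}$ in $P_{\bmm/z\bmm}(\XX)$; both facts follow from the elementary observation that $\XX^{\bm} - (\XX-\ee_i)^{\bm}$ has degree $\lvert\bm\rvert -1$ exactly when $m_i\ge 1$ and degree $\lvert\bm\rvert - 2$ or less otherwise.
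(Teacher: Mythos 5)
Your proof is correct and follows essentially the same route as the paper's: derive $P_{\bmm/z\bmm}(\XX) = P_{\bmm}(\XX) - P_{\bmm}(\XX-\ee_i)$ from the four-term sequence, \autoref{lem_filter_regular}, and the multigraded Hilbert polynomial theorem, and then read off the top-degree coefficients of the difference polynomial. You simply spell out the coefficient bookkeeping that the paper dismisses as ``easy to check,'' including the fact that monomials with $n_i=0$ drop out and that lower-degree terms cannot contaminate the coefficient of $\XX^{\bn-\ee_i}$.
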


Finally, we now extend \cite[Theorem 2.4]{TRUNG_POSITIVE} to a multigraded setting.
It reduces the computation of mixed multiplicities to the computation of a single-graded module.

\begin{theorem}
	\label{thm_mixed_mult_as_single_grad}
	Assume \autoref{setup_general}.
	Let $\bmm$ be a finitely generated graded $\BB$-module. 
	Let $d_{++}=\dim\left(\Supp_{++}(\bmm)\right)$ and $\bn \in \NN^r$ with $\lvert \bn\rvert=d_{++}$.
	For $1 \le i \le r$, let 
	$
	\mathbf{z}_i=z_{i,1}\ldots,z_{i,n_i} \in \left[\BB\right]_{\ee_i}.
	$
	Suppose that $\mathbf{z}_1,\ldots,\mathbf{z}_r$ is a filter-regular sequence on $\bmm$.
	Set $\EE$ to be the single-graded module
	$$
	\EE = {\left(\frac{\bmm/(\mathbf{z}_1,\ldots,\mathbf{z}_r)\bmm}{\HH_\nn^0\Big(\bmm/(\mathbf{z}_1,\ldots,\mathbf{z}_r)\bmm\Big)}\right)}^{\gr}.
 	$$
 	Then, the following equation holds
 	$$
 	e(\bn;\bmm) = \begin{cases}
		e(\EE) \quad \text{ if } \dim(\EE)=r\\
		0 \qquad\;\,\,\text{otherwise}.
 	\end{cases}
 	$$
 	\begin{proof}
		Applying \autoref{lem_reduce_filter_reg} successively it follows that $e(\bn;\bmm)=e(\mathbf{0};\bmm/(\mathbf{z}_1,\ldots,\mathbf{z}_r)\bmm)$.
		Therefore, from \autoref{thm_equality_mults}(ii) and \autoref{thm_mult_gr_mixed} we obtain the result. 
 	\end{proof}
\end{theorem}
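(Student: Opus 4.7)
The plan is to reduce the computation of $e(\bn;\bmm)$ to a multiplicity of type $\mathbf{0}$ on a quotient module by iterated use of \autoref{lem_reduce_filter_reg}, then translate to the Hilbert-series formalism via \autoref{thm_equality_mults}(ii), and finally recognize the resulting quantity as the single-graded multiplicity $e(\EE)$ using \autoref{thm_mult_gr_mixed}.

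First I would peel off the filter-regular sequence $\mathbf{z}_1,\ldots,\mathbf{z}_r$ one entry at a time, taken in the order $z_{1,1},\ldots,z_{1,n_1},z_{2,1},\ldots,z_{r,n_r}$. Grouping the elements by the index $i$ guarantees that at the step where we quotient by $z_{i,j}$, the current multi-index still has a positive $i$-th component, so the hypothesis $\bn \ge \ee_i$ in \autoref{lem_reduce_filter_reg} is satisfied; filter-regularity of the sequence gives the other hypothesis. Since each reduction drops $\dim(\Supp_{++})$ by at most one and $|\bn| = d_{++}$, the condition on dimensions is preserved throughout. After all $d_{++}$ reductions we obtain
$$
e(\bn;\bmm) \,=\, e\big(\mathbf{0};\bmm'\big), \qquad \bmm' := \bmm/(\mathbf{z}_1,\ldots,\mathbf{z}_r)\bmm.
$$

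Set $\bmm'' := \bmm'/\HH_\nn^0(\bmm')$, so that $\EE = (\bmm'')^\gr$. Applying \autoref{thm_equality_mults}(ii) gives $e(\mathbf{0};\bmm') = e_{\mathbf{0}}(\bmm'')$, and \autoref{lem_properties_quot_torsion}(iii) together with the bound $\dim(\Supp_{++}(\bmm'')) = \dim(\Supp_{++}(\bmm')) \le 0$ yields $\dim(\bmm'') \le r$. Since $\bmm''$ and $(\bmm'')^\gr$ have identical annihilators in $\BB = \BB^\gr$, we also have $\dim(\EE) = \dim(\bmm'') \le r$, which matches the dichotomy in the statement.

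For the final step I would split into cases. If $\dim(\EE) < r$, then $|\mathbf{0}+\mathbf{1}| = r > \dim(\bmm'')$, so \autoref{def_mixed_mult} gives $e_{\mathbf{0}}(\bmm'') = 0$, hence $e(\bn;\bmm) = 0$. If $\dim(\EE) = r$, then \autoref{thm_mult_gr_mixed}(i) applied to $\bmm''$ reads
$$
e(\EE) \,=\, \sum_{\substack{\bn \ge \mathbf{-1}\\|\bn+\mathbf{1}| = r}} e_\bn(\bmm'').
$$
Because $\bmm''$ is $\nn$-torsion free, no associated prime of $\bmm''$ contains $\nn$; thus \autoref{thm_mult_gr_mixed}(ii) forces $e_\bn(\bmm'') = 0$ for every $\bn \not\ge \mathbf{0}$. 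The only $\bn \ge \mathbf{0}$ with $|\bn+\mathbf{1}| = r$ is $\bn = \mathbf{0}$, so the sum collapses to $e(\EE) = e_{\mathbf{0}}(\bmm'') = e(\bn;\bmm)$.

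The only real subtlety is bookkeeping in the iteration of \autoref{lem_reduce_filter_reg}: one must choose the order of the $z_{i,j}$'s so that at every step the current multi-index has nonnegative coordinates and the right total degree. Once that is handled, the rest is a straightforward application of the bridges between the two notions of mixed multiplicity proved in \autoref{sect2} and \autoref{sect3}.
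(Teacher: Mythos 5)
Your proposal is correct and follows the same route as the paper's own (very terse) proof: iterate \autoref{lem_reduce_filter_reg} to reduce to $e(\mathbf{0};\bmm')$, pass to the Hilbert-series multiplicity $e_{\mathbf{0}}(\bmm'')$ via \autoref{thm_equality_mults}(ii), and then identify this with $e(\EE)$ (or $0$) using \autoref{thm_mult_gr_mixed}. You have merely filled in the bookkeeping the paper leaves implicit — the ordering of the $z_{i,j}$, the dimension bound $\dim(\bmm'')\le r$ via \autoref{lem_properties_quot_torsion}(iii), and the observation that minimal primes of maximal dimension in $\Supp(\bmm'')$ are associated primes so that \autoref{thm_mult_gr_mixed}(ii) kills the terms with $\bn\not\ge\mathbf{0}$ in the sum from \autoref{thm_mult_gr_mixed}(i).
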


\section{Multidegrees of multiprojective schemes}
\label{sect4}

In this section, we study the degrees of multiprojective schemes via the use of mixed multiplicities. 
The main objective here is to obtain a direct generalization of van der Waerden's result \cite{VAN_DER_WAERDEN} in a multigraded setting.
The results exposed in this short section are probably well-known and part of the folklore, but, for the sake of completeness, we include a very short account that depends directly on the previous sections.
Perhaps worthy of mentioning, our approach here is completely based upon the use of filter-regular elements (as introduced in \autoref{def_filter_reg_elems}).
The following setup is used throughout this section. 

\begin{setup}
	\label{setup_mult_degrees}
	Let $\kk$ be a field, $\AAA$ be the standard multigraded polynomial ring 
	$$
	\AAA = \kk[x_{1,0},\ldots,x_{1,d_1}] \otimes_\kk \cdots \otimes_\kk \kk[x_{r,0},\ldots,x_{r,d_r}]
	$$
	and $\PP$ be the corresponding multiprojective space
	$
	\PP = \multProj(\AAA) = \PP_\kk^{d_1} \times_\kk  \cdots \times_\kk \PP_\kk^{d_r}.
	$
\end{setup}

First, we recall the notion of degree for zero-dimensional schemes over $\kk$ (see, e.g., \cite[\S II.3.2]{EISENBUD_HARRIS_SCHEMES}).

\begin{definition}
	Let $\bYY$ be a $\kk$-scheme of finite type  with $\dim(\bYY)=0$.
	The \textit{degree of $\bYY$ relative to $\kk$} is given by 
	$$
	\deg_\kk\left(\bYY\right) := \sum_{y \in \bYY} \left[k(y):\kk\right]\,\text{length}\left(\OO_{\bYY,y}\right),
	$$ 
	where $k(y)$ denotes the residue field of the local ring $\OO_{\bYY,y}$.
\end{definition}

Next we define the multidegrees of closed subschemes of $\PP$ in terms of mixed multiplicities.

\begin{definition}
	\label{def_multdegree}
	Let $\bXX \subset \PP$ be a closed subscheme of $\PP$ defined as $\bXX=\multProj\left(\AAA/\JJ\right)$ where $\JJ \subset \AAA$ is a graded ideal.	
	Let $\bn \in \NN^r$ with $\lvert \bn \rvert \ge \dim(\bXX)$.
	The \textit{multidegree of $\bXX$ of type $\bn$ with respect to $\PP$} is given by 
	$$
	\deg_\PP^\bn\left(\bXX\right) := e\left(\bn;\frac{\AAA}{\JJ}\right).
	$$
\end{definition}

Note that, equivalently, the multidegrees of a closed subscheme of $\PP$ can be defined easily in terms of Chow rings.

\begin{remark}
	\label{rem_chow_ring}
	The Chow ring of $\PP = \PP_\kk^{d_1} \times_\kk \cdots \times_\kk \PP_\kk^{d_r}$ is given by $$
	A^*(\PP) = \frac{\ZZ[\xi_1,\ldots,\xi_r]}{\left(\xi_1^{d_1+1},\ldots,\xi_r^{d_r+1}\right)}
	$$
	where $\xi_i$ represents the class of the inverse image of a hyperplane of $\PP_\kk^{d_i}$ under the canonical projection $\pi_i: \PP \rightarrow \PP_\kk^{d_i}$. 
	If $\bXX \subset \PP$ is a closed subscheme of $\PP$ of dimension $d=\dim(\bXX)$, then the class of the cycle associated to $\bXX$ coincides with
	$$
	\left[\bXX\right] = \sum_{\substack{0 \le n_i \le d_i\\ \lvert \bn \rvert=d}} \deg_\PP^\bn\left(\bXX\right)\, \xi_1^{d_1-n_1}\cdots\xi_r^{d_r-n_r} \;\in A^*(\PP).
	$$
\end{remark}

\begin{definition}
For a closed subscheme $\bXX = \multProj\left(\AAA/\JJ\right) \subset \PP$, we say that $H \subset \PP$ is a \textit{filter-regular hyperplane on $\bXX$} if $H$ is given by $H=V_{++}(h)$ where $h \in {\left[\AAA\right]}_{\ee_i}$, for some $1 \le i \le r$, is a filter-regular element on $\AAA/\JJ$.
Similarly, we say that $H_1,\ldots,H_m \subset \PP$ is a \textit{filter-regular sequence of hyperplanes on $\bXX$} if $H_k$ is a filter-regular hyperplane on the closed subscheme $\bXX \cap H_1 \cap \cdots \cap H_{k-1}$ for all $1 \le k \le m$.
\end{definition}

We say that $H \subset \PP$ is a hyperplane in the $i$-th component of $\PP$ if $H=V_{++}(h)$ for some $h \in {\left[\AAA\right]}_{\ee_i}$. 

\begin{remark}
	(i)  A property $P$ is said to be satisfied by a general hyperplane in the $i$-th component of $\PP$, if there exists a dense open subset $U$ of ${\left[\AAA\right]}_{\ee_i}$ with the Zariski topology such that every hyperplane in $U$ satisfies the property $P$.
	\smallskip
	
	\noindent
	(ii) If we fix a closed subscheme $\bXX \subset \PP$ and assume that $\kk$ is an infinite field, then \autoref{lem_general_filter_reg_seq} implies that a sequence $H_1,\ldots,H_m \subset \PP$ of general hyperplanes will be a filter-regular sequence of hyperplanes on $\bXX$.
\end{remark}

Since it could be of interest, we do not assume that the field $\kk$ is infinite and we express the following result in terms of filter-regular hyperplanes.

\begin{theorem}
	\label{thm_van_der_Waerden}
	Assume \autoref{setup_mult_degrees}.
	Let $\bXX \subset \PP$ be a closed subscheme of $\PP$.
	Let $\bn \in \NN^r$ with $\lvert \bn \rvert =\dim(\bXX)$.
	For $1 \le i \le r$, let $H_{i,1},\ldots,H_{i,n_i} \subset \PP$ be a sequence of  hyperplanes in the $i$-th component of $\PP$.
	Suppose that 
	$$
	H_{1,1},\ldots,H_{1,n_1},\;\ldots,\; H_{i,1},\ldots,H_{i,n_i}, \;\ldots,\; H_{r,1},\ldots,H_{r,n_r} \subset \PP
	$$ 
	is a filter-regular sequence of hyperplanes on $\bXX$.
	Then, the following equality holds 
	$$
	\deg_\PP^\bn(\bXX) = \deg_\kk\left(\bXX \,\cap\, \left(\bigcap_{\substack{1 \le i \le r\\1 \le j \le n_i}}H_{i,j}\right)\right).
	$$
	\begin{proof}
		Suppose that $\bXX=\multProj(\BB)$ with $\BB=\AAA/\JJ$.
		Let $h_{i,j} \in {\left[\AAA\right]}_{\ee_i}$ such that $H_{i,j}=V_{++}(h_{i,j}) \subset \PP$.
		The closed subscheme 
		$$
		\bYY = \bXX \,\cap\, \left(\bigcap_{\substack{1 \le i \le r\\1 \le j \le n_i}}H_{i,j}\right)
		$$
		can be expressed as $\bYY=\multProj\big(\BB/\left(h_{1,1},\ldots,h_{r,n_r}\right)\BB\big)$.
		
		By using \autoref{lem_reduce_filter_reg} and \autoref{thm_hilbert_polynomial} (or \autoref{thm_equality_mults}), it follows that either $\dim(\bYY)=0$ or $\bYY=\emptyset$ and that 
		$$\deg_\PP^\bn(\bXX)=e(\bn;\BB)=e\big(\mathbf{0}; \BB/\left(h_{1,1},\ldots,h_{r,n_r}\right)\BB\big).
		$$
		From Serre's Vanishing Theorem (see \cite[Lemma 4.2]{Kleiman_geom_mult} for a multigraded setting) we get that 
		$$
		e\big(\mathbf{0}, \BB/\left(h_{1,1},\ldots,h_{r,n_r}\right)\BB\big) = \dim_\kk\left({\left[\BB/\left(h_{1,1},\ldots,h_{r,n_r}\right)\BB\right]}_\bm\right) = \dim_\kk\left(\HH^0\left(\bYY, \OO_{\bYY}(\bm)\right)\right)
		$$
		for $\bm \gg \mathbf{0}$.
		Since $\dim(\bYY)=0$, we obtain that $\bYY \cong \coprod_{y \in \bYY} \Spec(\OO_{\bYY,y})$ and that $\HH^0\left(\bYY, \OO_{\bYY}(\bm)\right)=\HH^0\left(\bYY, \OO_{\bYY}\right)$ for any $\bm$ (see, e.g., \cite[Proposition 5.11]{GORTZ_WEDHORN}).
		By summing up, we have
		$$
		\deg_\PP^\bn(\bXX) = \dim_\kk\left(\HH^0\left(\bYY, \OO_{\bYY}\right)\right) = \sum_{y \in \bYY} \left[k(y):\kk\right]\,\text{length}\left(\OO_{\bYY,y}\right).
		$$
		So, the result follows.
	\end{proof}
\end{theorem}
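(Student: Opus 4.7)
The plan is to successively cut $\bXX$ by the filter-regular hyperplanes until we reach a zero-dimensional subscheme, tracking the multidegree through \autoref{lem_reduce_filter_reg}, and then identify the resulting multiplicity with the degree of that zero-dimensional scheme via Serre vanishing. So I would first write $\bXX = \multProj(\BB)$ with $\BB = \AAA/\JJ$ and choose $h_{i,j} \in {\left[\AAA\right]}_{\ee_i}$ with $H_{i,j} = V_{++}(h_{i,j})$. The hypothesis that $H_{1,1},\ldots,H_{r,n_r}$ is a filter-regular sequence of hyperplanes on $\bXX$ translates precisely into the statement that the images of $h_{1,1},\ldots,h_{r,n_r}$ form a filter-regular sequence on $\BB$ in the sense of \autoref{def_filter_reg_elems}.

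Next I would apply \autoref{lem_reduce_filter_reg} one hyperplane at a time, each time peeling off one $\ee_i$ from the multi-index (matching the slot $i$ of the form being killed) while passing to $\BB/h\BB$. After all $\lvert \bn \rvert$ reductions I arrive at
$$
\deg_\PP^\bn(\bXX) = e(\bn;\BB) = e\big(\mathbf{0};\, \BB/(h_{1,1},\ldots,h_{r,n_r})\BB\big).
$$
Concurrently the same lemma ensures that $\dim(\Supp_{++}(\cdot))$ drops by at most one at each step, so after exactly $\lvert \bn \rvert = \dim(\bXX)$ reductions the closed subscheme $\bYY = \multProj\!\big(\BB/(h_{1,1},\ldots,h_{r,n_r})\BB\big)$ is either empty or zero-dimensional.

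The final step is to interpret $e(\mathbf{0};\BB')$ where $\BB' = \BB/(h_{1,1},\ldots,h_{r,n_r})\BB$. By \autoref{thm_equality_mults}, this multiplicity is the constant term of the multigraded Hilbert polynomial of $\BB'$, which coincides with $\dim_\kk\!\big({\left[\BB'\right]}_\bm\big)$ for $\bm \gg \mathbf{0}$. Invoking Serre vanishing in the multigraded setting (as in \cite[Lemma 4.2]{Kleiman_geom_mult}) gives $\dim_\kk\!\big({\left[\BB'\right]}_\bm\big) = \dim_\kk H^0\!\big(\bYY, \OO_{\bYY}(\bm)\big)$ for $\bm \gg \mathbf{0}$. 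Because $\bYY$ is zero-dimensional, $\bYY \cong \coprod_{y \in \bYY}\Spec(\OO_{\bYY,y})$ and the twists $\OO_{\bYY}(\bm)$ are all canonically isomorphic to $\OO_{\bYY}$, so $H^0(\bYY,\OO_{\bYY}(\bm)) \cong \prod_{y \in \bYY} \OO_{\bYY,y}$ independently of $\bm$. Taking $\kk$-dimensions and using $\dim_\kk \OO_{\bYY,y} = [k(y):\kk]\cdot \text{length}(\OO_{\bYY,y})$ gives exactly $\deg_\kk(\bYY)$.

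The main obstacle will be the bookkeeping in the inductive reduction: one must verify that at each stage the next form $h_{i,j}$ is still filter-regular on the current quotient (which is precisely the content of the hypothesis that the hyperplanes form a filter-regular sequence on $\bXX$), and that applying \autoref{lem_reduce_filter_reg} with the correct slot $i$ really does peel off $\ee_i$ from $\bn$. The other slightly delicate point is knowing that the reduction terminates in a zero-dimensional (not negative-dimensional) scheme, which is forced by the numerical matching $\lvert \bn \rvert = \dim(\bXX)$. The passage from Hilbert-function values to global sections can be taken as a black box from the multigraded form of Serre vanishing.
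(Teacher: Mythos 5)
Your proposal matches the paper's proof essentially step for step: express $\bXX$ as $\multProj(\BB)$, successively cut down by the filter-regular forms using \autoref{lem_reduce_filter_reg} to obtain $\deg_\PP^\bn(\bXX) = e(\mathbf{0};\BB/(h_{1,1},\ldots,h_{r,n_r})\BB)$, then invoke multigraded Serre vanishing and the structure of zero-dimensional schemes to identify this with $\deg_\kk(\bYY)$. The only cosmetic difference is that you phrase the interim quantity as ``the constant term of the Hilbert polynomial'' rather than citing \autoref{thm_hilbert_polynomial} directly, but the argument is the same.
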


\section{Projective degrees of rational maps}
\label{sect5}

During this section, we concentrate on the projective degrees of a rational map.
These numbers are defined as the multidegrees of the graph of a rational map.
As applications, we provide explicit formulas for the projective degrees of rational maps determined by perfect ideals of height two or by Gorenstein ideals of height three.
The following setup will be used throughout this section.

\begin{setup}
	\label{setup_projective_degrees}
	Let $\kk$ be a field, $R$ be the polynomial ring $R=\kk[x_0,\ldots,x_d]$ and $\mm \subset R$ be the graded irrelevant ideal $\mm=\left(x_0,\ldots,x_d\right)$.
	Let $n \ge d$ and $\FF : \PP_\kk^d = \Proj(R) \dashrightarrow  \PP_\kk^n$ be a rational map defined by $n+1$ homogeneous elements $\mathbf{f}=\{f_0,  \ldots, f_n\} \subset R$ of the same degree $\delta > 0$.
	Let $I \subset R$ be the homogeneous ideal $I=\left(f_0,\ldots,f_n\right)$ and $\PP$ be the biprojective space $\PP=\PP_\kk^d \times_\kk \PP_\kk^n$.
	Let $Y \subseteq \PP_\kk^n$ and $\Gamma \subseteq \PP$ be the closed subschemes given as the closures of the image and the graph of $\FF$, respectively.
\end{setup}

\begin{definition}
	\label{def_proj_degrees}
	For $0 \le i \le d$, the $i$-th \textit{projective degree} of $\FF:\PP_\kk^d \dashrightarrow \PP_\kk^n$ is given by 
	$$
	d_i(\FF) := \deg_\PP^{i,d-i}\left(\Gamma\right)
	$$
	(see \autoref{def_multdegree}).
\end{definition}

Equivalently, the projective degrees of a rational map can be defined as in \autoref{rem_chow_ring} (see, e.g., \cite[Example 19.4]{HARRIS} and \cite[\S 7.1.3]{DOLGACHEV}).  

First, we describe $Y$ and $\Gamma$ in algebraic terms as follows (for more details, see, e.g., \cite[\S 3]{SPECIALIZATION_RAT_MAPS}).
Let $\AAA$ be the bigraded polynomial ring $\AAA=R[y_0,\ldots,y_n]$ where $\bideg(x_i)=(1,0)$ and $\bideg(y_i)=(0,1)$.
The Rees algebra $\Rees(I)=\bigoplus_{q=0}^\infty I^qt^q \subset R[t]$ gives the bihomogeneous coordinate ring of $\Gamma$ and can be presented as a quotient of $\AAA$ via the canonical $R$-epimorphism
\begin{eqnarray*}
	\label{presentation_Rees}
	\Psi: \AAA & \twoheadrightarrow & \Rees(I) \subset R[t] \\ \nonumber
	y_i & \mapsto & f_it.
\end{eqnarray*}
The standard graded $\kk$-algebra $S = \kk[f_0,\ldots,f_n]=\bigoplus_{q=0}^\infty\left[I^q\right]_{q\delta}$ gives the homogeneous coordinate ring of $Y$ and we have the canonical epimorphism $\kk[y_0,\ldots,y_n] \twoheadrightarrow S, \; y_i \mapsto f_i$.
In geometrical terms, we obtain the closed immersions $\Gamma = \biProj(\Rees(I)) \hookrightarrow \PP = \biProj(\AAA)$ and $Y = \Proj(S) \hookrightarrow \PP_\kk^n=\Proj(\kk[y_0,\ldots,y_n])$.

Our main tool for the computation of the projective degrees of a rational map will be the saturated special fiber ring.

\begin{definition}[\cite{MULTPROJ}]
	The \textit{saturated special fiber ring} of $I$ is given by the graded $\kk$-algebra
	$$
	\sF(I) := \bigoplus_{q=0}^\infty {\left[\big(I^q:\mm^\infty\big)\right]}_{q\delta}.
	$$
\end{definition} 

A very important feature of $\sF(I)$ is that it is a finitely generated $S$-module and that its multiplicity is equal to $e\left(\sF(I)\right)=\deg(\FF)\,\deg_{\PP_\kk^n}(Y)$ (i.e., the product of the degrees of the map $\FF$ and its image $Y$; see \cite[Theorem 2.4]{MULTPROJ}).
Although it is well-known that $d_0(\FF)=\deg(\FF)\,\deg_{\PP_\kk^n}(Y)$, in the following theorem we provide a direct proof of the equality $d_0(\FF)=e\left(\sF(I)\right)$.

\begin{theorem}
	\label{thm_equal_d_0_sat_fib}
	Assume \autoref{setup_projective_degrees}.
	If $\FF : \PP_\kk^d  \dashrightarrow  \PP_\kk^n$ is a generically finite map, then 
	$$
	d_0(\FF)=e\left(\sF(I)\right).
	$$
	\begin{proof}
		For notational purposes set $\mathfrak{b}=(y_0,\ldots,y_n)$, $\nn=\mm \cap \mathfrak{b}$ and $\MM=\mm+\mathfrak{b}$.
		The Mayer-Vietoris sequence (see, e.g., \cite[Theorem 3.2.3]{Brodmann_Sharp_local_cohom}) yields the exact sequence 
		$$
		\HH_\MM^i(\Rees(I)) \,\rightarrow\, \HH_\mm^i(\Rees(I)) \oplus \HH_\mathfrak{b}^i(\Rees(I)) \,\rightarrow\, \HH_\nn^i(\Rees(I)) \,\rightarrow\, \HH_\MM^{i+1}(\Rees(I))
 		$$
 		for all $i \ge 0$.
 		Since ${\left[\HH_\MM^i(\Rees(I))\right]}_{(0,j)} = 0$ and ${\left[\HH_\mathfrak{b}^i(\Rees(I))\right]}_{(0,j)} = 0$ for all $j \gg 0$, 
 		it follows that 
 		\begin{equation}
 			\label{eq_equality_cohomologies}
			{\left[\HH_\mm^i(\Rees(I))\right]}_{(0,j)} \cong {\left[\HH_\nn^i(\Rees(I))\right]}_{(0,j)}
 		\end{equation}
		for all $i \ge 0$ and $j \gg 0$.
		
		Let $X=\Proj_{R\text{-gr}}\left(\Rees(I)\right)$ be the projective scheme obtained by considering $\Rees(I)$ as single-graded with the grading of $R$ (i.e., by setting $\deg(x_i)=1$ and $\deg(y_i)=0$). 
		Then, $\sF(I)$ is also given by
		$$
		\sF(I)\cong\HH^0(X,\OO_X)
		$$
		(see \cite[Lemma 2.8]{MULTPROJ}).
		
		We have following relations between sheaf and local cohomologies (see, e.g.,~\cite[Corollary 1.5]{HYRY_MULTIGRAD}, \cite[Appendix A4.1]{EISEN_COMM})
		\begin{equation}
			\label{eq_exact_seq_Gamma}
			0 \rightarrow {\left[\HH_{\nn}^0(\Rees(I))\right]}_{(0,j)} \rightarrow {\left[\Rees(I)\right]}_{(0,j)} \rightarrow  \HH^0\big(\Gamma, \OO_\Gamma(0,j)\big) \rightarrow {\left[\HH_{\nn}^1(\Rees(I))\right]}_{(0,j)} \rightarrow 0
		\end{equation}
		and 
		\begin{equation}
			\label{eq_exact_seq_SatFib}
			0 \rightarrow {\left[\HH_{\mm}^0(\Rees(I))\right]}_{(0,j)} \rightarrow {\left[\Rees(I)\right]}_{(0,j)} \rightarrow  {\left[\HH^0\big(X, \OO_X\big)\right]}_j \rightarrow {\left[\HH_{\mm}^1(\Rees(I))\right]}_{(0,j)} \rightarrow 0.
		\end{equation}		
		Combining \autoref{eq_equality_cohomologies}, \autoref{eq_exact_seq_Gamma} and \autoref{eq_exact_seq_SatFib} we obtain that 
		\begin{equation}
			\label{eq_Sat_Fib_equal_Sheaf_Coh}
			{\left[\sF(I)\right]}_j \,\cong\, {\left[\HH^0(X, \OO_X)\right]}_j \,\cong\,  \HH^0\big(\Gamma, \OO_\Gamma(0,j)\big)
		\end{equation}
		for $j \gg 0$.

		The bigraded Hilbert polynomial of $\Rees(I)$ is given by 
		$$
		P_{\Rees(I)}(u,v)=\sum_{i=0}^d \frac{d_i(\FF)}{i!(d-i)!}\,u^iv^{d-i}\, +\, \left({\normalfont} \text{terms of total degree} < d \right).
		$$
		By using the bigraded version of the Grothendieck-Serre formula (see, e.g.,  \cite[Lemma 4.3]{Kleiman_geom_mult},\cite[Theorem 2.4]{JAYANATHAN_VERMA}), we obtain that 
		$$
		P_{\Rees(I)}(0,j) = \sum_{i\ge0} {(-1)}^i \dim_\kk\Big(\HH^i\left(\Gamma,\OO_\Gamma(0,j)\right)\Big)
		$$
		for all $j$.
		Then, \autoref{eq_equality_cohomologies}, \cite[Corollary 1.5]{HYRY_MULTIGRAD} and \autoref{eq_Sat_Fib_equal_Sheaf_Coh} imply that 
		$$
		P_{\Rees(I)}(0,j) = \dim_\kk\left({\left[\sF(I)\right]}_j\right) \,+\, \sum_{i\ge1} {(-1)}^i \dim_\kk\Big({\left[\HH_\mm^{i+1}(\Rees(I))\right]}_{(0,j)}\Big)
		$$
		for $j \gg 0$.
		From \cite[Corollary 4.5]{SPECIALIZATION_RAT_MAPS} (also, see \cite[Proposition 3.1]{MULTPROJ}), we have that ${\left[\HH_\mm^i(\Rees(I))\right]}_{(0,*)}$ is a finitely generated graded $S$-module with 
		$$
		\dim\left({\left[\HH_\mm^i(\Rees(I))\right]}_{(0,*)}\right)\le d+1-i.
		$$
		Since $S \hookrightarrow \sF(I)$ is an integral extension and $\FF$ is generically finite, it follows that $\dim\left(\sF(I)\right)=\dim(S)=d+1$. 
		Therefore, for $j \gg 0$, $\dim_\kk\left({\left[\sF(I)\right]}_j\right)$ becomes a polynomial of degree $d$ whose leading coefficient coincides with the leading coefficient of $P_{\Rees(I)}(0,v)$.
		This implies that 
		$$
		d_0(\FF) = e\left(\sF(I)\right),
		$$ 
		and so we are done.
	\end{proof}
\end{theorem}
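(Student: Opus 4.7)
The plan is to compare two Hilbert functions that both carry degree-$d$ polynomial growth in one variable, and to show they share the same leading coefficient.

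First, I would use \autoref{def_proj_degrees} together with \autoref{def_multdegree} to rewrite $d_0(\FF) = e(0,d;\Rees(I))$. By \autoref{thm_hilbert_polynomial}, this equals $d!$ times the coefficient of $v^d$ in $P_{\Rees(I)}(0,v)$, so that
$$
P_{\Rees(I)}(0,j) = \frac{d_0(\FF)}{d!}\, j^d + O(j^{d-1}), \qquad j\gg 0.
$$
On the other side, since $\FF$ is generically finite, the integral inclusion $S \hookrightarrow \sF(I)$ gives $\dim\sF(I)=\dim S=d+1$, so $e(\sF(I))$ is $d!$ times the leading coefficient of the single-graded Hilbert polynomial of $\sF(I)$. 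Thus the statement reduces to the asymptotic identity $\dim_\kk[\sF(I)]_j = P_{\Rees(I)}(0,j)+O(j^{d-1})$ for $j \gg 0$.

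The next step is to put both sides into a cohomological framework. Let $X = \Proj_{R\text{-gr}}(\Rees(I))$ be $\Rees(I)$ viewed as single-graded with the $R$-grading. The identification $\sF(I) \cong \HH^0(X,\OO_X)$ (from \cite{MULTPROJ}) together with the Grothendieck four-term sequence gives an exact sequence relating $[\Rees(I)]_{(0,j)}$, $[\sF(I)]_j$, and $[\HH_\mm^{0,1}(\Rees(I))]_{(0,j)}$. On the graph side, the closure of the image of $\FF$ corresponds to the biprojective scheme $\Gamma=\biProj(\Rees(I))$, whose structure sheaf is controlled by local cohomology with respect to $\nn=\mm\cap\mathfrak{b}$, where $\mathfrak{b}=(y_0,\ldots,y_n)$; the multigraded Grothendieck–Serre formula then yields
$$
P_{\Rees(I)}(0,j)=\sum_{i\ge 0} (-1)^i\dim_\kk \HH^i\bigl(\Gamma,\OO_\Gamma(0,j)\bigr).
$$

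To bridge the $\mm$-side and the $\nn$-side, I would apply the Mayer–Vietoris sequence for the pair $(\mm, \mathfrak{b})$ with intersection $\nn$ and sum $\MM = \mm + \mathfrak{b}$. Since $[\HH_\mathfrak{b}^i(\Rees(I))]_{(0,j)} = 0$ and $[\HH_\MM^i(\Rees(I))]_{(0,j)} = 0$ for $j \gg 0$ (the former because $\Rees(I)$ is finitely generated in the $y$-variables, the latter by standard Artin–Rees vanishing for the maximal graded ideal), I would conclude
$$
[\HH_\mm^i(\Rees(I))]_{(0,j)} \;\cong\; [\HH_\nn^i(\Rees(I))]_{(0,j)} \quad \text{for all } i\ge 0 \text{ and } j \gg 0.
$$
Combining this isomorphism with the two four-term sequences identifies $[\sF(I)]_j$ with $\HH^0(\Gamma,\OO_\Gamma(0,j))$ up to contributions from $\HH_\mm^1$, which are negligible below.

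The main obstacle is controlling the higher sheaf/local cohomology terms to ensure they do not disturb the leading $j^d$ coefficient. I expect to invoke the fact, known from \cite{SPECIALIZATION_RAT_MAPS} and \cite{MULTPROJ}, that each $[\HH_\mm^i(\Rees(I))]_{(0,\ast)}$ is a finitely generated graded $S$-module of Krull dimension at most $d+1-i$. Substituting this bound into the alternating sum forces every $i \ge 1$ term to contribute at most $O(j^{d-1})$, so the alternating Grothendieck–Serre sum collapses, asymptotically, to $\dim_\kk[\sF(I)]_j$. Matching leading coefficients then yields $d_0(\FF) = e(\sF(I))$.
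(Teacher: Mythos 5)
Your proposal is correct and takes essentially the same route as the paper: reduce to comparing leading coefficients of $P_{\Rees(I)}(0,v)$ and the Hilbert polynomial of $\sF(I)$, use Mayer--Vietoris for the pair $(\mm,\mathfrak{b})$ to identify $[\HH_\mm^i(\Rees(I))]_{(0,j)}$ with $[\HH_\nn^i(\Rees(I))]_{(0,j)}$ for $j\gg 0$, combine the two four-term sheaf/local-cohomology sequences with $\sF(I)\cong\HH^0(X,\OO_X)$ to get $[\sF(I)]_j\cong\HH^0(\Gamma,\OO_\Gamma(0,j))$, apply bigraded Grothendieck--Serre, and kill the higher terms via the bound $\dim[\HH_\mm^i(\Rees(I))]_{(0,*)}\le d+1-i$ together with $\dim\sF(I)=d+1$ from generic finiteness. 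The only differences are expository (you frame the asymptotic comparison up front, and phrase the identification ``up to contributions from $\HH_\mm^1$'' where the paper obtains a genuine isomorphism for $j\gg 0$, but this makes no difference to the conclusion).
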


We recall a condition that will assumed in the next subsection.
We say that $I$ satisfies the condition $G_{d+1}$ when
$$
\mu(I_{\pp}) \le \dim(R_{\pp}) \quad \text{ for all }\; \pp \in \Spec(R) \;\text{ such that }\; \HT(\pp)<d+1.
$$				

\begin{remark}
	\label{rem_Fitting_conditions}
	In terms of Fitting ideals, $I$ satisfies the condition $G_{d+1}$ if and only if $\HT(\Fitt_i(I)) > i$ for all $1 \le i < d+1$.
	\begin{proof}
		It follows from \cite[Proposition 20.6]{EISEN_COMM}.
	\end{proof}
\end{remark}

The proposition below contains some reductions to be used later.

\begin{proposition}
	\label{lem_find_filt_reg_rat_map}
	Assume \autoref{setup_projective_degrees} and suppose that $\kk$ is an infinite field. 
	There exist elements $h_1,\ldots,h_d \in {\left[R\right]}_1$ such that, if we set $S_i=R/(h_1,\ldots,h_i)R$ and $J_i=IS_i$ for $1 \le i \le d$, then the following statements hold:
	\begin{enumerate}[(i)]
		\item $d_i(\FF) = e\big(0,d-i;\Rees_{S_i}(J_i)\big)$. 
		\item If $\HT(I)=c$, then $d_i(\FF)=\delta^{d-i}$ for all $d-c+1\le i \le d$.
		\item If $R/I$ is Cohen-Macaulay with minimal graded free resolution 
		$$
		F_\bullet: \quad 0 \rightarrow F_c \rightarrow \cdots \rightarrow F_1 \rightarrow F_0 \rightarrow R/I \rightarrow 0,
		$$
		then, for all $1 \le j \le d-c$, $S_j/J_j$ is Cohen-Macaulay with minimal graded free resolution
		$$
		F_\bullet \otimes_R S_j: \quad 0 \rightarrow F_c\otimes_R S_j \rightarrow \cdots \rightarrow F_1\otimes_R S_j \rightarrow F_0\otimes_R S_j \rightarrow S_j/J_j \rightarrow 0.
		$$
		Additionally, if $I$ satisfies the condition $G_{d+1}$, then $J_j$ satisfies the condition $G_{d+1-j}$ for all $1 \le j \le d-c$.
	\end{enumerate} 
	\begin{proof}
		Set $L_i \subset R$ to be the ideal $L_i=\Fitt_i(I)$ for $1 \le i < d+1$.		
		By using \autoref{lem_general_filter_reg_seq} we can find a sequence $h_1,\ldots,h_d \in {\left[R\right]}_1={\left[\Rees(I)\right]}_{(1,0)}$  which is filter-regular on $\Rees(I)$, on 
		$$
		\gr_I(R) \,=\, \Rees(I) \otimes_R (R/I)  \,=\, \bigoplus_{q=0}^\infty I^q/I^{q+1},
		$$ 
		on $R/I$, and on $R/L_i$ for all $i$.

		$(i)$ Applying $-\otimes_R S_i$ to the inclusion $\Rees(I) \hookrightarrow R[t]$ yields a natural map 
		$$
		\mathfrak{s} : \Rees(I) \otimes_R S_i \twoheadrightarrow \Rees_{S_i}(J_i) \,\subset \, S_i[t].
		$$
		For $\pp \in \Spec(R) \setminus V(I)$, localizing the surjection $\mathfrak{s}:\Rees(I) \otimes_R S_i \surjects \Rees_{S_i}(J_i)$ at $R\setminus \pp$, we easily see that it becomes an isomorphism.
		It then follows that some power of $I$ annihilates $\Ker({\mathfrak{s}})$, that is, 
		$
		I^l \cdot \Ker(\mathfrak{s})=0
		$
		for some $l>0$.		
		We have that $\dim\left(\gr_I(R)\right)=\dim(R)=d+1$ and $\dim\left(\Rees(I)\right)=\dim(R)+1=d+2$ (see, e.g., \cite[\S 5.1]{huneke2006integral}).
		Therefore, \autoref{lem_reduce_filter_reg} and \cite[Lemma 1.2]{HYRY_MULTIGRAD} yield that 
		\begin{align*}
			\dim\big(\Supp_{++}\left(\Ker(\mathfrak{s})\right)\big) &\,\le\, \dim\big(\Supp_{++}\big((\Rees(I) \otimes_R S_i) \otimes_R (R/I) \big)\big) \\
			&\,=\, \dim\big(\Supp_{++}\left(\gr_I(R) \otimes_R S_i \right)\big) \\
			&\,\le\, \dim(\gr_I(R))-2-i= d-1-i
		\end{align*}
		and 
		$$
		\dim\big(\Supp_{++}\left(\Rees(I) \otimes_R S_i\right)\big) \,=\, \dim\big(\Rees(I)\big)-2-i=d-i.
		$$
		Hence, from the short exact sequence $0 \rightarrow \Ker(\mathfrak{s}) \rightarrow \Rees(I) \otimes_R S_i \rightarrow \Rees_{S_i}(J_i) \rightarrow 0$ and the additivity of multiplicities, it follows that 
		$$
		e(0,d-i;\Rees(I)\otimes_R S_i)=e(0,d-i;\Rees_{S_i}(J_i)).
		$$
		By using \autoref{lem_reduce_filter_reg} successively we obtain
		$$
		d_i(\FF) = e\Big(i,d-i; \Rees(I)\Big)=e\big(0,d-i;\Rees(I) \otimes_R S_i\big).
		$$
		So, the result follows.
		
		$(ii)$ The condition of  $h_1,\ldots,h_d$ being a filter-regular sequence on $R/I$ yields that $J_i$ is an $\mm S_i$-primary ideal for $d-c+1\le i \le d$.
		It then follows that $d_i(\FF) = e\big(0,d-i;\Rees_{S_i}(J_i)\big)=\delta^{d-i}$ (see, e.g., \cite[Observation 3.2]{KPU_blowup_fibers}).
		
		$(iii)$ 
		Since $\pd(R/I)=c$, the Auslander-Buchsbaum formula implies that $\depth(R/I) = d-c$.
		When $R/I$ is Cohen-Macaulay and $\kk$ is infinite, we can assure that $h_1,\ldots,h_{d-c}$ is a regular sequence on $R$ and on $R/I$ (see, e.g., \cite[Proposition 1.5.12]{BRUNS_HERZOG}).
		Then, using that $h_1,\ldots,h_{d-c}$ is a regular sequence on $R$ and on $R/I$, for $1 \le j \le d-c$, it follows that $S_j/J_j\cong R/(I,h_1,\ldots,h_j)$ is Cohen-Macaulay and that 
		$$
		\HH_l\left(F_\bullet\otimes_RS_j\right) \,\cong\, \Tor_l^R(R/I,S_j) \,\cong\, \HH_l\big(K_\bullet(h_1,\ldots,h_j;R/I)\big) \,=\, 0
		$$ 
		for $l\ge 1$ (here $K_\bullet(h_1,\ldots,h_j;R/I)$ denotes the Koszul complex).
		Thus, $F_\bullet\otimes_RS_j$ is the minimal graded free resolution of $S_j/J_j$.
		
		For $1 \le j \le d-c$, since $F_2 \otimes_R S_j \rightarrow F_1 \otimes_R S_j \rightarrow J_j \rightarrow 0$ is a presentation of $J_j$, we get that $\Fitt_i(J_j)=L_iS_j$.
		Since $h_1,\ldots,h_{d-c}$ is a filter-regular sequence on $R/L_i$, the assumption of the condition $G_{d+1}$ yields that $\HT(L_iS_j) = \min\lbrace\HT(L_i),\dim(S_j)\rbrace \ge \min\lbrace i+1, \dim(S_j) \rbrace$ for all $1 \le j \le d-c$.
		This implies that $J_j$ satisfies $G_{d+1-j}$ for all $1 \le j \le d-c$.
		
		So, we are done.
	\end{proof}
\end{proposition}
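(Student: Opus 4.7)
The plan is to produce one sequence $h_1, \ldots, h_d \in [R]_1$ that is simultaneously filter-regular on several modules, and then read off each item of the proposition as a consequence. By \autoref{lem_general_filter_reg_seq} each filter-regularity requirement on a finitely generated bigraded $\AAA$-module is a Zariski-dense open condition inside $[R]_1 = [\AAA]_{(1,0)}$; since $\kk$ is infinite I can intersect finitely many of these opens and choose $h_1, \ldots, h_d$ filter-regular on $\Rees(I)$, on $\gr_I(R) = \Rees(I) \otimes_R (R/I)$, on $R/I$, and on each $R/\Fitt_i(I)$ for $1 \le i < d+1$.

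For part (i), I would iterate \autoref{lem_reduce_filter_reg} along $h_1, \ldots, h_i$ (each of bidegree $(1,0)$) applied to the bigraded $\AAA$-module $\Rees(I)$ to get
\[
d_i(\FF) = e\bigl(i, d-i;\, \Rees(I)\bigr) = e\bigl(0, d-i;\, \Rees(I) \otimes_R S_i\bigr).
\]
The canonical surjection $\mathfrak{s}: \Rees(I) \otimes_R S_i \twoheadrightarrow \Rees_{S_i}(J_i)$ becomes an isomorphism after localizing away from $V(I)$, so $\Ker(\mathfrak{s})$ is annihilated by some power $I^l$ and is therefore finitely generated over $\gr_I(R) \otimes_R S_i$. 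Using $\dim(\Rees(I)) = d+2$ and $\dim(\gr_I(R)) = d+1$, together with the relation $\dim(\multProj) = \dim - 2$ from \cite[Lemma 1.2]{HYRY_MULTIGRAD} and the filter-regularity of the $h_k$ on $\gr_I(R)$, I would bound
\[
\dim\bigl(\Supp_{++}(\Ker \mathfrak{s})\bigr) \le d-1-i < d-i = \dim\bigl(\Supp_{++}(\Rees(I) \otimes_R S_i)\bigr),
\]
so additivity of mixed multiplicities discards the kernel and yields $d_i(\FF) = e(0, d-i;\, \Rees_{S_i}(J_i))$.

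For part (ii), once $i \ge d-c+1$, the filter-regularity of $h_1, \ldots, h_d$ on the ring $R/I$ (of dimension $d+1-c$) forces $\dim(S_i/J_i) = 0$, so $J_i$ is $\mm S_i$-primary and generated in the single degree $\delta$; a direct Rees-algebra computation for such parameter-like ideals then gives $d_i(\FF) = \delta^{d-i}$. For part (iii), Auslander--Buchsbaum yields $\depth(R/I) = d - c$, and for $j \le d - c$ a filter-regular sequence on $R/I$ is actually a regular sequence on both $R$ and $R/I$. The Koszul-homology identification $\Tor_l^R(R/I, S_j) \cong \HH_l(K_\bullet(h_1, \ldots, h_j; R/I))$ vanishes for $l \ge 1$, so $F_\bullet \otimes_R S_j$ remains a minimal free resolution, forcing Cohen--Macaulayness. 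Finally, the tail $F_2 \otimes_R S_j \to F_1 \otimes_R S_j$ presents $J_j$, hence $\Fitt_i(J_j) = \Fitt_i(I) S_j$; filter-regularity on $R/\Fitt_i(I)$ preserves heights up to the dimension drop, and combined with the $G_{d+1}$ hypothesis read through \autoref{rem_Fitting_conditions} it yields $\HT(\Fitt_i(J_j)) > i$, which is the condition $G_{d+1-j}$ for $J_j$.

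The main obstacle will be the dimension estimate on $\Ker(\mathfrak{s})$ in part (i): the argument depends crucially on having arranged filter-regularity on $\gr_I(R)$ (not merely on $\Rees(I)$), and one must carefully track the multigraded notion of relevant support under successive quotients by the $h_k$. Bundling all the hyperplane choices at the outset, rather than constructing them inductively item by item, is what makes this bookkeeping tractable and lets the three parts follow from a single linear system.
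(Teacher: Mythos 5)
Your proposal is correct and follows essentially the same route as the paper: one simultaneous choice of a sequence filter-regular on $\Rees(I)$, $\gr_I(R)$, $R/I$ and the $R/\Fitt_i(I)$, the surjection $\mathfrak{s}$ with kernel killed by a power of $I$ and the resulting dimension bound discarding it, the reduction of $d_i(\FF)$ via \autoref{lem_reduce_filter_reg}, the $\mm S_i$-primary observation for (ii), and the Koszul/Fitting-ideal argument for (iii). No substantive differences to report.
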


\subsection{Certain families of rational maps}

In this short subsection we compute all the projective degrees of the rational map $\FF:\PP_\kk^d \dashrightarrow \PP_\kk^n$ when $I$ is a perfect ideal of height two or a Gorenstein ideal of height three, under the assumption of the condition $G_{d+1}$.
The results of this subsection are easy consequences of the previous developments together with \cite[Theorem A]{MULT_SAT_PERF_HT_2} and \cite[Theorem A]{MULT_GOR_HT_3}. 
It should be noted that the condition $G_{d+1}$ is always satisfied by generic perfect ideals of height two and by generic Gorenstein ideals of height three.

\begin{theorem}
	\label{thm_proj_deg_perf_ht_2}
	Assume \autoref{setup_projective_degrees} with the following conditions:
	\begin{enumerate}[(i)]
		\item $I$ is perfect of height two with Hilbert-Burch resolution of the form
		$$
		0 \rightarrow \bigoplus_{i=1}^nR(-\delta-\mu_i) \xrightarrow{\varphi} {R(-\delta)}^{n+1} \rightarrow I\rightarrow 0.			
		$$
		\item $I$ satisfies the condition $G_{d+1}$.
	\end{enumerate}
	Then, the projective degrees of $\FF:\PP_\kk^d \dashrightarrow \PP_\kk^n$ are given by
	$$
	d_i(\FF) =  
	e_{d-i}(\mu_1,\mu_2,\ldots,\mu_n)
	$$
	where $	e_{d-i}(\mu_1,\mu_2,\ldots,\mu_n)$ denotes the elementary symmetric polynomial
	$$
	e_{d-i}(\mu_1,\mu_2,\ldots,\mu_n)= 
	\sum_{1\le j_1 < j_2 < \cdots < j_{d-i} \le n} \mu_{j_1}\mu_{j_2}\cdots\mu_{j_{d-i}}.
	$$
\end{theorem}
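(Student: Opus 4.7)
The plan is to reduce the computation of the higher projective degrees $d_i(\FF)$ to computations of the $0$-th projective degree of rational maps on lower-dimensional projective spaces, and then to invoke the multiplicity formula for the saturated special fiber ring of perfect height-two ideals from \cite[Theorem A]{MULT_SAT_PERF_HT_2}.

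After a harmless faithfully flat base change we may assume that $\kk$ is infinite, so that \autoref{lem_find_filt_reg_rat_map} furnishes general linear forms $h_1, \ldots, h_d \in {\left[R\right]}_1$; set $S_i = R/(h_1, \ldots, h_i)R$ and $J_i = I S_i$. The boundary cases $i \in \{d-1, d\}$ are handled directly by part $(ii)$ of that proposition, which gives $d_i(\FF) = \delta^{d-i}$ since $\HT(I)=2$. Because the maximal minors of the Hilbert-Burch matrix $\varphi$ have degree $\mu_1 + \cdots + \mu_n$, we have $\delta = \mu_1 + \cdots + \mu_n$, so $d_{d-1}(\FF) = \delta = e_1(\mu_1, \ldots, \mu_n)$ and $d_d(\FF) = 1 = e_0(\mu_1, \ldots, \mu_n)$, in agreement with the claimed formula in the boundary range.

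For the range $0 \le i \le d-2$, part $(i)$ of \autoref{lem_find_filt_reg_rat_map} yields $d_i(\FF) = e\bigl(0, d-i; \Rees_{S_i}(J_i)\bigr)$, while part $(iii)$ ensures that $J_i$ is a perfect ideal of height two in the polynomial ring $S_i$ of dimension $d+1-i \ge 3$, whose Hilbert-Burch resolution has the same column degrees $\mu_1, \ldots, \mu_n$, and which satisfies the condition $G_{\dim S_i}$. Consider the auxiliary rational map $\FF_i : \PP_\kk^{d-i} = \Proj(S_i) \dashrightarrow \PP_\kk^n$ defined by the images of the $f_j$ in $S_i$. By construction $d_0(\FF_i) = e\bigl(0, d-i; \Rees_{S_i}(J_i)\bigr) = d_i(\FF)$, so the task reduces to computing $d_0(\FF_i)$.

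Applying \autoref{thm_equal_d_0_sat_fib} to $\FF_i$ gives $d_0(\FF_i) = e(\sF(J_i))$, and then the formula of \cite[Theorem A]{MULT_SAT_PERF_HT_2} applied to $J_i$ in $S_i$, whose hypotheses are secured by part $(iii)$ of \autoref{lem_find_filt_reg_rat_map}, identifies $e(\sF(J_i))$ with $e_{d-i}(\mu_1, \ldots, \mu_n)$, completing the proof. The main obstacle is verifying the generic finiteness hypothesis of \autoref{thm_equal_d_0_sat_fib} for $\FF_i$; this should follow from the observation that the condition $G_{\dim S_i}$ on the perfect height-two ideal $J_i$ forces its analytic spread to equal $\dim S_i = d+1-i$, which in turn ensures that $\FF_i$ is dominant onto a subvariety of $\PP_\kk^n$ of dimension $d-i$.
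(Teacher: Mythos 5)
Your proposal is correct and follows essentially the same route as the paper: reduce modulo a filter-regular sequence of linear forms using \autoref{lem_find_filt_reg_rat_map}, invoke \autoref{thm_equal_d_0_sat_fib} to identify the surviving zeroth projective degree with $e(\sF_{S_i}(J_i))$, and then cite \cite[Theorem A]{MULT_SAT_PERF_HT_2}. Two points you add are worth noting: you make explicit that the boundary cases $i \in \{d-1, d\}$, which \autoref{lem_find_filt_reg_rat_map}$(ii)$ resolves as $\delta^{d-i}$, agree with the claimed elementary-symmetric expressions because the Hilbert-Burch theorem forces $\delta = \mu_1 + \cdots + \mu_n$ (the paper leaves this implicit); and you flag the generic-finiteness hypothesis of \autoref{thm_equal_d_0_sat_fib} as a step that needs justification. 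The latter is a genuine gap in your writeup as stated --- you offer a heuristic ("this should follow from\dots") rather than a proof --- but it is also not addressed in the paper's own proof, where it is presumably absorbed into the hypotheses and proof of \cite[Theorem A]{MULT_SAT_PERF_HT_2}; to close it cleanly you would want to cite that reference (or a result of Kustin--Polini--Ulrich type) to show that $G_{\dim S_i}$ for the height-two perfect ideal $J_i$ forces $\dim \sF_{S_i}(J_i) = \dim S_i$, equivalently $\ell(J_i) = \dim S_i$.
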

\begin{proof}
	We can assume that $\kk$ is an infinite field.
	From \autoref{lem_find_filt_reg_rat_map}, we can find $h_1,\ldots,h_d$ such that, if we set $S_i=R/(h_1,\ldots,h_i)$ and $J_i=IS_i$, then $d_i(\FF) = e\big(0,d-i;\Rees_{S_i}(J_i)\big)$ and, for all $1 \le j \le d-2$, $J_j$ is a perfect ideal of ideal height two that satisfies $G_{d+1-j}$ with syzygies of degrees $\mu_1,\mu_2,\ldots,\mu_n$.
	
	For $0 \le j \le d-2$, note that $d_j(\FF) = e\big(0,d-j;\Rees_{S_i}(J_j)\big)$ is equal to the $0$-th projective degree of a rational map determined by the minimal generators of $J_j$, then \autoref{thm_equal_d_0_sat_fib} and \cite[Theorem A]{MULT_SAT_PERF_HT_2} yield that 
	$$
	d_j(\FF) = e\left(\sF_{S_j}(J_j)\right) = e_{d-j}\left(\mu_1,\ldots,\mu_n\right).
	$$
	On the other hand, the case $d-1 \le j \le d$ follows directly from \autoref{lem_find_filt_reg_rat_map}$(ii)$.
	
	So, we are done.
\end{proof}

\begin{theorem}
		\label{thm_proj_deg_Gor_ht_3}
		Assume \autoref{setup_projective_degrees} with the following conditions:
		\begin{enumerate}[(i)]
			\item $I$ is a Gorenstein ideal of height three.
			\item Every non-zero entry of an alternating minimal presentation matrix of $I$  has degree $D\ge 1$.
			\item $I$ satisfies the condition $G_{d+1}$.		
		\end{enumerate}
	Then, the projective degrees of $\FF:\PP_\kk^d \dashrightarrow \PP_\kk^n$ are given by
	$$
	d_i(\FF) = \begin{cases}
		D^{d-i} \sum_{k=0}^{\lfloor\frac{n-d+i}{2}\rfloor}\binom{n-1-2k}{d-i-1} \quad \text{ if } 0 \le i \le d-3 \\
		\delta^{d-i} \quad\quad\quad\;\,\,\,\,\qquad\qquad\qquad \text{if } d-2 \le i \le d.
	\end{cases}
	$$	
\end{theorem}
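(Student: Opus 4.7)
The plan is to mirror the proof of \autoref{thm_proj_deg_perf_ht_2}: reduce each projective degree to a $0$-th projective degree via generic hyperplane sections, identify this $0$-th degree with the multiplicity of a saturated special fiber ring by \autoref{thm_equal_d_0_sat_fib}, and finally apply \cite[Theorem A]{MULT_GOR_HT_3} in place of \cite[Theorem A]{MULT_SAT_PERF_HT_2}. We may first assume $\kk$ is infinite. Then I would invoke \autoref{lem_find_filt_reg_rat_map} to select general linear forms $h_1,\ldots,h_d \in R_1$ and set $S_i := R/(h_1,\ldots,h_i)$, $J_i := I S_i$. Since $\HT(I) = 3$, part~(ii) of that lemma directly yields $d_i(\FF) = \delta^{d-i}$ for $d-2 \le i \le d$, so the remaining case is $0 \le i \le d-3$.

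For each such $i$, the core task is to verify that $J_i \subset S_i$ inherits all three hypotheses of the theorem: being Gorenstein of height three, having an alternating minimal presentation matrix whose non-zero entries are all of degree $D$, and satisfying $G_{d+1-i}$. A Gorenstein ideal is Cohen-Macaulay, so part~(iii) of \autoref{lem_find_filt_reg_rat_map} applies: tensoring the minimal free resolution of $R/I$ with $S_i$ produces the minimal free resolution of $S_i/J_i$, and the condition $G_{d+1-i}$ is transferred. Since the resolution of $R/I$ is self-dual of length three with alternating middle map by the Buchsbaum-Eisenbud structure theorem, and both self-duality and the alternating property are preserved by tensoring with $S_i$, the ideal $J_i$ is Gorenstein of height three with alternating presentation matrix obtained by reducing the one of $I$ modulo $(h_1,\ldots,h_i)$. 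From here, \autoref{lem_find_filt_reg_rat_map}(i) gives $d_i(\FF) = e\bigl(0, d-i;\Rees_{S_i}(J_i)\bigr)$, which is the $0$-th projective degree of the rational map determined by the minimal generators of $J_i$; applying \autoref{thm_equal_d_0_sat_fib} followed by \cite[Theorem A]{MULT_GOR_HT_3} then produces the desired closed form.

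The main obstacle I anticipate is verifying that each non-zero entry of the alternating presentation matrix of $I$ (of degree $D$) remains non-zero after reducing modulo $(h_1,\ldots,h_i)$; otherwise the uniform-degree hypothesis could degenerate for $J_i$. Because the presentation matrix has only finitely many entries and $\kk$ is infinite, this is handled by appending finitely many additional open density conditions on the tuple $(h_1,\ldots,h_d)$ to those already required by \autoref{lem_find_filt_reg_rat_map}: for each specific non-zero entry $a \in R_D$ of the presentation matrix, demand that $a \notin (h_1,\ldots,h_i)_D$ for every $1 \le i \le d-3$. Each such condition defines a Zariski-dense open subset of the space of tuples of linear forms, and their intersection remains dense, so generic $h_1,\ldots,h_d$ simultaneously satisfy all of them together with the filter-regularity conditions needed in \autoref{lem_find_filt_reg_rat_map}. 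Once this genericity is secured, the reduction to \cite[Theorem A]{MULT_GOR_HT_3} is straightforward and the formulas of the theorem follow.
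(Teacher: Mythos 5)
Your proposal is correct and takes essentially the same route as the paper: assume $\kk$ infinite, use \autoref{lem_find_filt_reg_rat_map}(ii) for $d-2 \le i \le d$, and for $0 \le i \le d-3$ use \autoref{lem_find_filt_reg_rat_map}(i),(iii) to cut down to a generic linear section, check that $J_i$ inherits all three hypotheses, and then combine \autoref{thm_equal_d_0_sat_fib} with Theorem~A of the height-three Gorenstein multiplicity paper. The one place you depart from the paper is the extra genericity argument you append to guarantee that entries of the reduced presentation matrix stay non-zero; that step is superfluous. The hypothesis being transferred to $J_i$ is only that \emph{every non-zero entry} of an alternating minimal presentation matrix has degree $D$, and this is automatically preserved under reduction modulo $(h_1,\ldots,h_i)$: any entry of degree $D$ that reduces to zero simply ceases to be a non-zero entry, while those that survive keep degree $D$. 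Moreover \autoref{lem_find_filt_reg_rat_map}(iii) already ensures that $F_\bullet \otimes_R S_i$ is the \emph{minimal} graded free resolution of $S_i/J_i$, which rules out the degeneracy (e.g.\ a zero column) you were implicitly guarding against, and the formula in the cited Theorem~A depends only on $n$, the ambient dimension, and $D$, not on the pattern of zero entries. So the proof is valid and matches the paper's, modulo one unneeded but harmless extra open condition.
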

\begin{proof}
	The proof follows verbatim to the one of \autoref{thm_proj_deg_perf_ht_2} but now using \cite[Theorem A]{MULT_GOR_HT_3} (instead of using \cite[Theorem A]{MULT_SAT_PERF_HT_2}).
	More explicitly, for $0 \le j \le d-3$, by substituting in the formula of \cite[Theorem A]{MULT_GOR_HT_3} we obtain that 
	$$
	d_j(\FF) \;=\; D^{(d-j+1)-1}\, \sum_{k=0}^{\big\lfloor\frac{(n+1)-(d-j+1)}{2}\big\rfloor}\binom{(n+1)-2-2k}{(d-j+1)-2}.
	$$
	Again, the case $d-2 \le j \le d$ follows directly from \autoref{lem_find_filt_reg_rat_map}$(ii)$.
\end{proof}

\section*{Acknowledgments}
The author is grateful to Mateusz Micha\l{}ek and Bernd Sturmfels for helpful discussions and for sparking the author's interest on the projective degrees of rational maps.
The use of \textit{Macaulay2} \cite{MACAULAY2} was important in the preparation of this paper.

\bibliographystyle{elsarticle-num} 
\begin{bibdiv}
\begin{biblist}

\bib{Bhattacharya}{article}{
      author={Bhattacharya, P.~B.},
       title={The {H}ilbert function of two ideals},
        date={1957},
     journal={Proc. Cambridge Philos. Soc.},
      volume={53},
       pages={568\ndash 575},
}

\bib{Brodmann_Sharp_local_cohom}{book}{
      author={Brodmann, M.~P.},
      author={Sharp, R.~Y.},
       title={Local cohomology.},
     edition={Second},
      series={Cambridge Studies in Advanced Mathematics},
   publisher={Cambridge University Press, Cambridge},
        date={2013},
      volume={136},
        note={An algebraic introduction with geometric applications},
}

\bib{BRUNS_HERZOG}{book}{
      author={Bruns, Winfried},
      author={Herzog, J\"urgen},
       title={Cohen-{M}acaulay rings},
     edition={2},
      series={Cambridge Studies in Advanced Mathematics},
   publisher={Cambridge University Press},
        date={1998},
}

\bib{BuchsbaumEisenbud}{article}{
      author={Buchsbaum, David~A.},
      author={Eisenbud, David},
       title={Algebra structures for finite free resolutions, and some
  structure theorems for ideals of codimension {$3$}},
        date={1977},
        ISSN={0002-9327},
     journal={Amer. J. Math.},
      volume={99},
      number={3},
       pages={447\ndash 485},
}

\bib{MULTPROJ}{article}{
      author={{Bus{\'e}}, Laurent},
      author={{Cid-Ruiz}, Yairon},
      author={{D'Andrea}, Carlos},
       title={{Degree and birationality of multi-graded rational maps}},
        date={2018-05},
     journal={ArXiv e-prints},
      eprint={1805.05180},
}

\bib{MULT_SAT_PERF_HT_2}{article}{
      author={{Cid-Ruiz}, Yairon},
       title={Multiplicity of the saturated special fiber ring of height two
  perfect ideals},
        date={2018},
     journal={to appear in Proc. Amer. Math. Soc.},
        note={1807.03189},
}

\bib{MULT_GOR_HT_3}{article}{
      author={Cid-Ruiz, Yairon},
      author={Mukundan, Vivek},
       title={Multiplicity of the saturated special fiber ring of height three
  gorenstein ideals},
        date={2019},
     journal={arXiv preprint arXiv:1909.13633},
}

\bib{SPECIALIZATION_RAT_MAPS}{article}{
      author={Cid-Ruiz, Yairon},
      author={Simis, Aron},
       title={Degree of rational maps via specialization},
        date={2019},
     journal={arXiv preprint arXiv:1901.06599},
}

\bib{DOLGACHEV}{book}{
      author={Dolgachev, Igor~V.},
       title={Classical algebraic geometry},
   publisher={Cambridge University Press, Cambridge},
        date={2012},
        note={A modern view},
}

\bib{EISEN_COMM}{book}{
      author={Eisenbud, David},
       title={Commutative algebra with a view towards algebraic geometry},
      series={Graduate Texts in Mathematics, 150},
   publisher={Springer-Verlag},
        date={1995},
}

\bib{EISENBUD_HARRIS_SCHEMES}{book}{
      author={Eisenbud, David},
      author={Harris, Joe},
       title={The geometry of schemes},
      series={Graduate Texts in Mathematics},
   publisher={Springer-Verlag, New York},
        date={2000},
      volume={197},
}

\bib{GORTZ_WEDHORN}{book}{
      author={G\"{o}rtz, Ulrich},
      author={Wedhorn, Torsten},
       title={Algebraic geometry {I}},
      series={Advanced Lectures in Mathematics},
   publisher={Vieweg + Teubner, Wiesbaden},
        date={2010},
        ISBN={978-3-8348-0676-5},
         url={https://doi.org/10.1007/978-3-8348-9722-0},
        note={Schemes with examples and exercises},
}

\bib{MACAULAY2}{misc}{
      author={Grayson, Daniel~R.},
      author={Stillman, Michael~E.},
       title={Macaulay2, a software system for research in algebraic geometry},
        note={Available at \url{http://www.math.uiuc.edu/Macaulay2/}},
}

\bib{HARRIS}{book}{
      author={Harris, Joe},
       title={Algebraic geometry},
      series={Graduate Texts in Mathematics},
   publisher={Springer-Verlag, New York},
        date={1995},
      volume={133},
        note={A first course, Corrected reprint of the 1992 original},
}

\bib{HERMANN_MULTIGRAD}{article}{
      author={Herrmann, Manfred},
      author={Hyry, Eero},
      author={Ribbe, J\"{u}rgen},
      author={Tang, Zhongming},
       title={Reduction numbers and multiplicities of multigraded structures},
        date={1997},
     journal={J. Algebra},
      volume={197},
      number={2},
       pages={311\ndash 341},
}

\bib{HERZOG_HIBI_MONOMIALS}{book}{
      author={Herzog, J\"{u}rgen},
      author={Hibi, Takayuki},
       title={Monomial ideals},
      series={Graduate Texts in Mathematics},
   publisher={Springer-Verlag London, Ltd., London},
        date={2011},
      volume={260},
}

\bib{huneke2006integral}{book}{
      author={Huneke, Craig},
      author={Swanson, Irena},
       title={Integral closure of ideals, rings, and modules},
   publisher={Cambridge University Press},
        date={2006},
      volume={13},
}

\bib{HYRY_MULTIGRAD}{article}{
      author={Hyry, Eero},
       title={The diagonal subring and the {C}ohen-{M}acaulay property of a
  multigraded ring},
        date={1999},
     journal={Trans. Amer. Math. Soc.},
      volume={351},
      number={6},
       pages={2213\ndash 2232},
}

\bib{JAYANATHAN_VERMA}{article}{
      author={Jayanthan, A.~V.},
      author={Verma, J.~K.},
       title={Grothendieck-{S}erre formula and bigraded {C}ohen-{M}acaulay
  {R}ees algebras},
        date={2002},
     journal={J. Algebra},
      volume={254},
      number={1},
       pages={1\ndash 20},
}

\bib{VERMA_BIGRAD}{incollection}{
      author={Katz, D.},
      author={Mandal, S.},
      author={Verma, J.~K.},
       title={Hilbert functions of bigraded algebras},
        date={1994},
   booktitle={Commutative algebra ({T}rieste, 1992)},
   publisher={World Sci. Publ., River Edge, NJ},
       pages={291\ndash 302},
}

\bib{Kleiman_geom_mult}{article}{
      author={Kleiman, Steven},
      author={Thorup, Anders},
       title={A geometric theory of the {B}uchsbaum-{R}im multiplicity},
        date={1994},
     journal={J. Algebra},
      volume={167},
      number={1},
       pages={168\ndash 231},
}

\bib{KPU_blowup_fibers}{article}{
      author={Kustin, Andrew},
      author={Polini, Claudia},
      author={Ulrich, Bernd},
       title={Blowups and fibers of morphisms},
        date={2016},
     journal={Nagoya Math. J.},
      volume={224},
      number={1},
       pages={168\ndash 201},
}

\bib{EXPONENTIAL_VARIETIES}{article}{
      author={Micha\l{}ek, Mateusz},
      author={Sturmfels, Bernd},
      author={Uhler, Caroline},
      author={Zwiernik, Piotr},
       title={Exponential varieties},
        date={2016},
     journal={Proc. Lond. Math. Soc. (3)},
      volume={112},
      number={1},
       pages={27\ndash 56},
}

\bib{MILLER_STURMFELS}{book}{
      author={Miller, Ezra},
      author={Sturmfels, Bernd},
       title={Combinatorial commutative algebra},
      series={Graduate Texts in Mathematics},
   publisher={Springer-Verlag, New York},
        date={2005},
      volume={227},
}

\bib{TRUNG_VERMA_SURVEY}{article}{
      author={Trung, N.~V.},
      author={Verma, J.~K.},
       title={Hilbert functions of multigraded algebras, mixed multiplicities
  of ideals and their applications},
        date={2010},
     journal={J. Commut. Algebra},
      volume={2},
      number={4},
       pages={515\ndash 565},
}

\bib{TRUNG_REDUCTIONEXPONENT}{article}{
      author={Trung, Ng\^{o}~Vi\^{e}t},
       title={Reduction exponent and degree bound for the defining equations of
  graded rings},
        date={1987},
     journal={Proc. Amer. Math. Soc.},
      volume={101},
      number={2},
       pages={229\ndash 236},
}

\bib{TRUNG_POSITIVE}{article}{
      author={Trung, Ng\^{o}~Vi\^{e}t},
       title={Positivity of mixed multiplicities},
        date={2001},
     journal={Math. Ann.},
      volume={319},
      number={1},
       pages={33\ndash 63},
}

\bib{VAN_DER_WAERDEN}{inproceedings}{
      author={Van~der Waerden, Bartel~Leendert},
       title={On Hilbert’s function, series of composition of ideals and a
  generalization of the theorem of {B}ezout},
        date={1929},
   booktitle={Proc. roy. acad. amsterdam},
      volume={31},
       pages={749\ndash 770},
}

\end{biblist}
\end{bibdiv}

\end{document}